\documentclass[11pt]{article}

\usepackage{amssymb}
\usepackage{amsthm}
\usepackage{amsmath}
\usepackage{latexsym}
\usepackage{amsfonts}
\usepackage{color}

\DeclareSymbolFont{rsfscript}{OMS}{rsfs}{m}{n}
\DeclareSymbolFontAlphabet{\mathrsfs}{rsfscript}
\DeclareMathOperator{\Fix}{Fix}
\DeclareMathOperator{\Rad}{Rad}

\DeclareMathOperator{\MT}{MT}

\DeclareMathOperator{\End}{End}

\DeclareMathOperator{\Curl}{Curl}
\DeclareMathOperator{\Per}{Per}

\DeclareMathOperator{\St}{St}
\DeclareSymbolFont{rsfscript}{OMS}{rsfs}{m}{n}
\newtheorem{theorem}{Theorem}[section]
\newtheorem{prop}[theorem]{Proposition}

\newtheorem{lemma}[theorem]{Lemma}
\newtheorem{cor}[theorem]{Corollary}

\newtheorem{prob}[theorem]{Problem}

\newcommand{\wt}{\widetilde}
\newcommand{\wh}{\widehat}
\newcommand{\oo}{\overline}
\newcommand{\inv}{^{-1}}

\def\mapright#1{\smash{\mathop{\longrightarrow}\limits^{#1}}}

\hyphenation{de-fi-ni-tion}

\title{On periodic points of free inverse monoid endomorphisms}
\author{{\bf Emanuele Rodaro, Pedro V. Silva}\\ $ $\\ {\em Centro de
Matem\'{a}tica, Faculdade de Ci\^{e}ncias, Universidade do
Porto,}\\ {\em R. Campo Alegre 687, 4169-007 Porto, Portugal}\\
{\em e-mail:} emanuele.rodaro@fc.up.pt, pvsilva@fc.up.pt}
\date{\today}

\begin{document}
\maketitle

\begin{abstract}
\noindent It is proved that the periodic point submonoid of a free
inverse monoid endomorphism is always finitely generated. Using 
Chomsky's hierarchy of languages, we prove that the fixed point
submonoid of an endomorphism of a free inverse monoid can be
represented by a context-sensitive language but, in general, it cannot
be represented by a context-free language. 
\end{abstract}

\section{Introduction}
The dynamical study of the automorphisms of a free group and their
space of ends is a 
well established subject in discrete Dynamical Systems. Hence it is a
natural issue to explore these problems in the more general setting of
semigroups. In \cite{CS2}, Cassaigne and the second author studied
finiteness conditions for the infinite fixed 
points of (uniformly continuous) endomorphisms of monoids defined by
special confluent 
rewriting systems, extending results known for free monoids
\cite{PP}. This line of research was pursued by the second author in
subsequent papers \cite{Sil1,Sil2,Sil3}. Recently, in a joint paper
with Sykiotis \cite{RSS}, we considered the case of graph groups,
studying the periodic 
and fixed points of endomorphisms. Inspired by this work, we studied
in \cite{RS} the case of endomorphisms of trace monoids
and their extensions to real traces. 

In this paper we
consider the case of endomorphisms of free inverse monoids. Inverse
monoids are a natural generalization of groups. Indeed, while groups
are represented by bijective functions, inverse semigroups are
represented by partial injective functions. Although these structures
seem to be close, the behavior of their respective free objects can be
radically different. In this 
paper we provide further evidence to this viewpoint. In fact, we show
that 
the inverse submonoid of the points fixed by an endomorphism of a free
inverse monoid is not in general finitely generated as in the group
case. However, we can still frame the fixed point submonoid in 
Chomsky's hierarchy, at the context-sensitive level. A better outcome
is achieved for the periodic point submonoid (finitely generated,
hence rational) and for some related
submonoids called radicals (context-free). 

The paper is organized as follows. Section \ref{sec: preliminaries} is
devoted to preliminaries. In Section \ref{sec: fix and periodic} we
show that the inverse submonoid of periodic points is finitely
generated, while the inverse submonoid of fixed points is not in
general context-free. This is achieved by proving a gap theorem for
endomorphisms whose induced endomorphisms in the free group has no
nontrivial fixed points in the boundary. In Section \ref{sec:radical},
we introduce a family of particular inverse submonoids called
radicals, useful in 
describing the fixed points submonoid. We show that the radical behaves
better than the fixed points submonoids by proving that the $N$th radical is
context-free for suitable $N$. Using this result we finally frame the
fixed point
submonoid in the context-sensitive class. Finally, we raise some open
problems in the last section.

\section{Preliminaries}\label{sec: preliminaries}

The reader is assumed to have some familiarity with the basics of
formal language theory. For references, see \cite{Ber, HMU}.

\subsection{Free groups}

Let $A$ be a finite alphabet and let $\tilde{A}=A\cup A\inv$ be the
\emph{involutive alphabet} where $A\inv$ is the set of \emph{formal}
inverses of $A$. The operator $\inv:A\rightarrow A\inv: a\mapsto
a\inv$ is extended to an involution on the free monoid $\wt{A}^*$ through
$$
1\inv = 1, \;\; (a\inv)\inv=a, \;\; (uv)\inv=v\inv u\inv\;\;\; (a\in
A;\;u,v\in\wt{A}^*).
$$
Let $\sim$ be the congruence on $\wt{A}^*$ generated by the relation
$\{(aa\inv,1)\mid a \in \wt{A} \}$. The
quotient $F_A= \wt{A}^*/\sim$ is the \emph{free group} on $A$, and
$\sigma:\wt{A}^* \to F_A$ denotes the canonical homomorphism.

We denote by $R_A$ the set of all reduced words on
$\wt{A}^*$, i.e.
$$
R_A=\tilde{A}^*\setminus\bigcup_{a\in\tilde{A}}\tilde{A}^*aa\inv\tilde{A}^*
$$
For each $u\in\tilde{A}^*$, $\overline{u}\in R_A$ is the (unique)
reduced word $\sim$-equivalent to $u$. We write also
$\overline{u\sigma}=\overline{u}$. As usual, we often
identify the elements of $F_A$ with their reduced representatives. For $B\subseteq \wt{A}^*$, $\oo{B}$ denotes the set of reduced words of $B$.

We describe now the {\em boundary} of $F_A$. Given $u,v \in F_A$,
written in reduced form, we denote by $u\wedge v$ the longest common
prefix of $u$ and $v$. Let
$$r(u,v) = \left\{
\begin{array}{ll}
|u\wedge v|&\mbox{ if }u \neq v\\
+\infty&\mbox{ otherwise}
\end{array}
\right.$$
and define $d(u,v) = 2^{-r(u,v)}$ (using the convention $2^{-\infty} =
0$). Then $d$ is know as the {\em prefix metric} on $F_A$. In fact, $d$
is an ultrametric since $$d(u,w) \leq \max\{ d(u,v),d(v,w) \}$$ holds
for all $u,v,w \in F_A$.

The metric space $(F_A,d)$ is not complete, but its completion admits
a simple description: we add to $F_A$ all the (right) infinite reduced
words $a_1a_2a_3\ldots$ on $\wt{A}$. These new elements are called the
{\em boundary} of $F_A$ and the completion (which is indeed compact)
is denoted by
$\wh{F_A}$. The metric on $\wh{F_A}$ admits the same description as
$d$ and is also denoted by $d$.

By standard topology results  (see \cite[Section XIV.6]{Dug}), any
uniformly continuous endomorphism $\theta$ of $F_A$ admits a (unique)
continuous extension $\wh{\theta}: \wh{F_A} \to \wh{F_A}$. Thus, if
$(u_n)_n$ is a sequence on $F_A$ converging in $\wh{F_A}$, then
$$(\lim_{n\to \infty}u_n)\wh{\theta} = \lim_{n\to \infty}
u_n\theta$$
by continuity. The uniformly continuous endomorphisms of $F_A$ turn
out to be the monomorphisms. For details on the boundary and
endomorphism extensions on a general context (containing free groups
as a particular case), the reader is referred to \cite{CS}.

\subsection{Free inverse monoids}

A monoid $M$ is said to be {\em inverse} if
$$\forall u \in M\; \exists ! u\inv \in M: (uu\inv u = u \: \wedge \; u\inv
uu\inv = u\inv).$$
For details on inverse monoids, see \cite{Pet}.

Let $E(M)$ denote the subset of idempotents of $M$. Then $M$ is
inverse if and only if $$\forall u \in M\; \exists  v \in M: uvu = u$$
and $ef = fe$ for all $e,f \in E(M)$. Furthermore,
by the well-known Vagner-Preston representation theorem, inverse
monoids are represented by partially injective transformations as
groups are represented by permutations.

We recall also that there is a multiplicative partial order (the {\em natural
  partial order}) defined on $M$ by
$$s\le t \quad \mbox{if } s=et  \mbox{ for some }e\in E(M).$$

Let $\rho$ be the congruence on $\wt{A}^*$ generated by the relation
$$\{(uu\inv u,u)\mid u \in \wt{A}^* \} \cup \{(uu\inv vv\inv,vv\inv
uu\inv )\mid u,v \in \wt{A}^* \},$$
known as the {\em Vagner congruence} on $\wt{A}^*$. The
quotient $M_A = \wt{A}^*/\rho$ is the \emph{free inverse monoid} on $A$, and
$\pi:\wt{A}^* \to M_A$ denotes the canonical homomorphism. Note that
$\pi$ is {\em matched}, since $u\inv\pi = (u\pi)\inv$ for every $u \in
\wt{A}^*$. Moreover, there exists a canonical homomorphism
$\sigma':M_A \to F_A$ such that $\sigma = \pi\sigma'$. In fact,
$(u\rho)\sigma' = \oo{u}$ for every $u \in \wt{A}^*$.

Two normal forms for $M_A$ were introduced independently by Scheiblich
\cite{Sch} and Munn \cite{Mun}, following respectively an algebraic
and a geometric approach. We can make them converge with the help of
the {\em Cayley graph} $C_A$ of $F_A$. Indeed, let $C_A$ have vertex set
$F_A$ and edges $g \mapright{a} ga$ for all $g \in F_A$ and $a \in
\wt{A}$. Note that $C_A$ is an {\em inverse graph}:
$(p,a,q)$ is an edge of $C_A$, so is $(q,a\inv,p)$. Such edges are
said to be the inverse of each other.
It is well known that, if $A \neq \emptyset$, the Cayley
graph $C_A$ is an infinite tree (if we view pairs of inverse edges as a single
undirected edge).

We can identify $M_A$ with the set $M'_A$ of all ordered pairs
$(\Gamma,g)$, where $\Gamma$ is a finite connected inverse subgraph of $C_A$
having both $1$ and $g$ as vertices. If we view $\Gamma$ as a finite
birooted tree (for roots 1 and $g$), we have the Munn
representation. If we focus on its set
of vertices (a finite prefix closed subset of $F_A$), we have the
Scheiblich representation. Now $F_A$ acts on the left of $C_A$ in the
obvious way and the multiplication on $M'_A$ can be given through
$$(\Gamma,g)\cdot(\Gamma',g')=(\Gamma\cup g\Gamma',gg'),$$
and inversion through
$$(\Gamma,g)\inv = (g\inv\Gamma, g\inv).$$
Finally, the homomorphism $\pi$ translates into $\pi':\wt{A}^* \to M'_A$
given by $u\pi = (\MT(u),u\sigma)$, where $\MT(u)$ (the {\em Munn
  tree} of $u$) is the finite inverse subgraph of $C_A$ defined by
reading the path $1 \mapright{u} g$ in $C_A$. We shall usually identify
$u\sigma$ with $\oo{u}$ and $\MT(u)$ with the prefix-closed set $T(u)$
of $R_A$ having as elements the reduced forms of the vertices of $\MT(u)$.

Note that
$$T(uu\inv) = T(u)$$
holds for every $u \in \wt{A}^*$. Moreover, for all $u,v \in
\wt{A}^*$, we have
$$u\rho \leq v\rho \hspace{.7cm} \Leftrightarrow \hspace{.7cm} T(u)
\supseteq T(v) \; \wedge \; \oo{u} = \oo{v}.$$
Also, if we view $\MT(u)$ as an automaton $\mathcal{A}_u$ with
initial vertex 1 and terminal vertex $u$, then
$$L(\mathcal{A}_u) = \{ v \in \wt{A}^* \mid v \geq u\mbox{ in }M_A \}.$$
Such basic properties of $M_A$ will be used throughout the paper
without further reference.

Finally, we define the norm $||u|| = \max\{ |\oo{v}|: v \in T(u)\}$
for every $u \in M_A$.

\subsection{Chomsky's hierarchy in $M_A$}

Following \cite{Ber}, we can define {\em rational subsets} of $M_A$
with the help of the homomorphism $\pi$: we say that $X \subseteq M_A$
is rational if $X = L\pi$ for some rational $L \subseteq \wt{A}^*$. This idea
can be extended to higher classes of languages
in the Chomsky's hierarchy, for instance we say that $X \subseteq M_A$ is
\emph{context-free} (respectively \emph{context-sensitive}) whenever
$X = L\pi$ for some context-free (respectively context-sensitive) $L
\subseteq \wt{A}^*$.

Actually, in the case of rational subsets, the concept is independent
from the homomorphism considered, and this property holds for
arbitrary monoids.

Anisimov and Seifert's Theorem (see \cite{Ber}) states that a subgroup
of a group $G$
is rational if and only if it is finitely generated. We note that there is an
analogous of this theorem for inverse semigroups \cite[Lemma
3.6]{FreeInvGraphImm} stating that a closed inverse subsemigroup
of an inverse semigroup is rational if and only if it is finitely
generated (here closed meaning upper set for the natural partial order).

\section{Fixed points and periodic points}\label{sec: fix and periodic}

Given a monoid $M$, we denote by $\End(M)$ the monoid of endomorphisms
of $M$. Given $\varphi \in \End(M)$,
$$\Fix(\varphi)=\{x\in M: x\varphi = x\}$$
is the submonoid of {\em fixed points} of $\varphi$, and
$$\Per(\varphi) = \cup_{n \geq 1} \Fix(\varphi^n)$$
is the submonoid of {\em periodic points} of $\varphi$ (note that
$\Fix(\varphi^m) \subseteq \Fix(\varphi^n)$ whenever $m|n$, hence
$(\Fix(\varphi^k))(\Fix(\varphi^{\ell})) \subseteq
\Fix(\varphi^{k\ell})$ and so $\Per(\varphi)$ is indeed a
submonoid). Moreover, if $M$ is an inverse monoid (a group), then both
$\Fix(\varphi)$ and $\Per(\varphi)$ are inverse submonoids (subgroups).

Let
$\theta \in \End(F_A)$. By \cite{GT2}, $\Fix(\theta)$ is finitely
generated. Now, by Nielsen's Theorem (see \cite{LS}), every
subgroup of a free group is free. Recall that the {\em rank} of a free
group $F$ is the cardinality of a basis of $F$.
By \cite{IT}, every subgroup $\Fix(\theta^n)$ has actually rank $\le |A|$.
According to Takahasi's Theorem \cite{Tak}, every ascending chain of
subgroups of bounded rank of $F_A$ must be stationary, and so must be
$$\Fix(\theta^{1!}) \subseteq \Fix(\theta^{2!}) \subseteq
\Fix(\theta^{3!}) \subseteq \ldots$$
Since $\Per(\theta) = \cup_{n \geq 1} \Fix(\theta^{n!})$, this
provides a proof for the following well-known result:

\begin{prop}
\label{perfix}
Let $\theta \in \End(F_A)$. Then $\Per(\theta) = \Fix(\theta^N)$ for
some $N \geq 1$.
\end{prop}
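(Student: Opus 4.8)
The plan is to lean on the three results about free groups recalled just before the statement. By \cite{GT2}, $\Fix(\theta)$ is finitely generated; by Nielsen's theorem it is free, so it has a well-defined rank, and by \cite{IT} that rank is at most $|A|$. The same applies to every power, since $\theta^n \in \End(F_A)$: each $\Fix(\theta^n)$ is a free group of rank at most $|A|$, \emph{uniformly} in $n$. This uniform bound is precisely what Takahasi's theorem requires.

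First I would pass to a cofinal chain of fixed-point subgroups. Since $\Fix(\theta^m) \subseteq \Fix(\theta^n)$ whenever $m \mid n$, the sequence
$$\Fix(\theta^{1!}) \subseteq \Fix(\theta^{2!}) \subseteq \Fix(\theta^{3!}) \subseteq \cdots$$
is an ascending chain of subgroups of $F_A$ of rank at most $|A|$, so by Takahasi's theorem it is eventually stationary: there is $k \ge 1$ with $\Fix(\theta^{k!}) = \Fix(\theta^{n!})$ for all $n \ge k$. I then set $N = k!$.

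It remains to verify $\Per(\theta) = \Fix(\theta^N)$. The inclusion $\Fix(\theta^N) \subseteq \Per(\theta)$ is immediate. Conversely, if $x \in \Per(\theta)$, then $x \in \Fix(\theta^m)$ for some $m \ge 1$; since $m \mid m!$ we get $x \in \Fix(\theta^{m!})$, and the chain inclusions together with stationarity give $\Fix(\theta^{m!}) \subseteq \Fix(\theta^{k!}) = \Fix(\theta^N)$ — using $m! \le k!$ when $m \le k$, and the stationarity of the chain when $m > k$. Hence $x \in \Fix(\theta^N)$, which finishes the argument.

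The whole proof is routine once the quoted theorems are granted, so there is no genuine obstacle; the one point deserving a moment's care is the choice of the factorial exponents, which must be both a divisibility chain (so that the subgroups they index form an ascending chain, to which Takahasi applies) and cofinal for divisibility (so that the resulting directed union of fixed-point subgroups is exactly $\Per(\theta)$), and the sequence $n \mapsto n!$ meets both requirements.
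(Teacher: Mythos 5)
Your proof is correct and takes essentially the same route as the paper: cite \cite{GT2}, Nielsen, and \cite{IT} to bound the ranks uniformly, apply Takahasi to the chain indexed by factorials, and conclude. The only difference is that you spell out the final verification $\Per(\theta)=\Fix(\theta^N)$ in more detail than the paper does (which just notes $\Per(\theta)=\bigcup_n\Fix(\theta^{n!})$); one tiny wording slip — the inclusion $\Fix(\theta^{m!})\subseteq\Fix(\theta^{k!})$ for $m\le k$ comes from $m!\mid k!$, not merely $m!\le k!$, though of course the former holds.
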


The minimum integer $N$ such that $\Per(\theta)=\Fix(\theta^N)$ is
called the \emph{curl} of $\theta$ and is denoted by $\Curl(\theta)$.

\begin{lemma}
\label{redcurl}
Let $\varphi \in \End(M_A)$ and let $N = \Curl(\oo{\varphi})$. Let $k
\ge 1$. Then $\Curl(\oo{\varphi}^{Nk}) = 1$.
\end{lemma}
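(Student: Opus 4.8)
The plan is to unwind the definitions of $\Per$ and $\Curl$; the statement then falls out of the monotonicity $\Fix(\psi^m)\subseteq\Fix(\psi^n)$ for $m\mid n$ together with Proposition~\ref{perfix}. Write $\psi=\oo{\varphi}\in\End(F_A)$ for the induced endomorphism, so that $N=\Curl(\psi)$ and $\oo{\varphi}^{Nk}=\psi^{Nk}$; by Proposition~\ref{perfix}, $\Curl(\psi^{Nk})$ is well defined. Since $1$ is obviously the least positive integer $m$ for which $\Per(\psi^{Nk})=\Fix\bigl((\psi^{Nk})^m\bigr)$ could hold, it suffices to prove the single equality $\Per(\psi^{Nk})=\Fix(\psi^{Nk})$.

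For the nontrivial inclusion $\Per(\psi^{Nk})\subseteq\Fix(\psi^{Nk})$ I would first note
\[
\Per(\psi^{Nk})=\bigcup_{m\ge 1}\Fix(\psi^{Nkm})\subseteq\bigcup_{n\ge 1}\Fix(\psi^{n})=\Per(\psi)=\Fix(\psi^{N}),
\]
the last equality being exactly the definition of $N=\Curl(\psi)$. Hence any $x\in\Per(\psi^{Nk})$ satisfies $x\psi^{N}=x$; iterating, $x\psi^{jN}=x$ for every $j\ge 0$, and taking $j=k$ gives $x\psi^{Nk}=x$, i.e. $x\in\Fix(\psi^{Nk})$.

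The reverse inclusion $\Fix(\psi^{Nk})\subseteq\Per(\psi^{Nk})$ is immediate from $\Per(\psi^{Nk})=\bigcup_{m\ge 1}\Fix\bigl((\psi^{Nk})^{m}\bigr)$ by taking $m=1$. Combining the two inclusions yields $\Per(\psi^{Nk})=\Fix(\psi^{Nk})$, whence $\Curl(\psi^{Nk})=1$. There is no serious obstacle here: the only points deserving a word of care are that $\oo{(\cdot)}\colon\End(M_A)\to\End(F_A)$ respects composition (so that $\psi^{Nk}$ really is the endomorphism of $F_A$ induced by $\varphi^{Nk}$ and Proposition~\ref{perfix} applies to it), and that the passage from ``$x$ fixed by $\psi^{N}$'' to ``$x$ fixed by $\psi^{Nk}$'' rests on $N\mid Nk$ — which is precisely why the exponent has been chosen to be a multiple of $\Curl(\oo{\varphi})$.
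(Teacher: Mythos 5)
Your proof is correct and follows essentially the same route as the paper: both establish the cycle of inclusions $\Fix(\oo{\varphi}^{Nk}) \subseteq \Per(\oo{\varphi}^{Nk}) \subseteq \Per(\oo{\varphi}) = \Fix(\oo{\varphi}^{N}) \subseteq \Fix(\oo{\varphi}^{Nk})$, the last step using $N \mid Nk$. The paper compresses this into one displayed chain; your write-up just unwinds the same inclusions more verbosely.
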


\begin{proof}
We have
$$\Fix(\oo{\varphi}^{Nk}) \subseteq \Per(\oo{\varphi}^{Nk}) \subseteq
\Per(\oo{\varphi}) = \Fix(\oo{\varphi}^N) \subseteq \Fix(\oo{\varphi}^{Nk}),$$
hence $\Fix(\oo{\varphi}^{Nk}) = \Per(\oo{\varphi}^{Nk})$ and so
$\Curl(\oo{\varphi}^{Nk}) = 1$.
\end{proof}

Assume now that $\varphi \in \End(M_A)$. Then $\varphi$ induces an endomorphism
$\overline{\varphi}\in\End(F_A)$ defined by $\overline{\varphi} =
(\sigma')\inv\varphi\sigma'$.
It is straightforward to check that $\overline{\varphi}$ is a
well defined homomorphism using the fact that $\varphi$ sends
idempotents into idempotents.

\begin{lemma}
\label{permv}
Let $\varphi \in \End(M_A)$ and let $u \in M_A$. Then the
following conditions are equivalent:
\begin{itemize}
\item[(i)]
$u \in \Fix(\varphi)$;
\item[(ii)]
$uu\inv \in \Fix(\varphi)$ and $\oo{u} \in \Fix(\oo{\varphi})$.
\end{itemize}
Moreover, in this case we have $(T(u))\oo{\varphi} \subseteq T(u)$.
\end{lemma}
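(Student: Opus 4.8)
The plan is to exploit the structure of $M_A$ via the pair $(T(u), \oo u)$ together with the fact that $u \rho \le v \rho$ iff $T(u) \supseteq T(v)$ and $\oo u = \oo v$. The implication (i) $\Rightarrow$ (ii) is the easy direction: if $u\varphi = u$, then applying the matched homomorphism $\varphi$ to $uu\inv$ gives $(uu\inv)\varphi = (u\varphi)(u\varphi)\inv = uu\inv$, so $uu\inv \in \Fix(\varphi)$; and applying $\sigma'$ and using $\oo\varphi = (\sigma')\inv\varphi\sigma'$ gives $\oo u\,\oo\varphi = (u\sigma')\oo\varphi = u\varphi\sigma' = u\sigma' = \oo u$, so $\oo u \in \Fix(\oo\varphi)$.

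For (ii) $\Rightarrow$ (i), the idea is that in $M_A$ an element is determined by its idempotent part and its image in $F_A$: concretely, for any $u \in M_A$ one has $u = (uu\inv)\cdot \oo u$ in the sense that $u = e\,g$ where $e = uu\inv$ is the idempotent with Munn tree $T(u)$ and $g$ is the element $\oo u$ of $F_A$ (viewed inside $M_A$ via a reduced word). More usefully, $u$ is the unique element of $M_A$ with $T(\cdot) = T(u)$ and $\oo{\,\cdot\,} = \oo u$. Now assume $uu\inv \in \Fix(\varphi)$ and $\oo u \in \Fix(\oo\varphi)$. Applying $\varphi$ to $u$: since $\sigma'$ is a homomorphism with $\sigma = \pi\sigma'$, we get $\oo{u\varphi} = (u\varphi)\sigma' = (u\sigma')\oo\varphi = \oo u\,\oo\varphi = \oo u$, so $u\varphi$ and $u$ have the same image in $F_A$. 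Also $(u\varphi)(u\varphi)\inv = (uu\inv)\varphi = uu\inv$, so $u\varphi$ and $u$ have the same idempotent part, hence the same Munn tree $T(u\varphi) = T(uu\inv) = T(u)$. By the uniqueness just noted, $u\varphi = u$.

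For the final assertion, suppose $u \in \Fix(\varphi)$. Then $uu\inv \in \Fix(\varphi)$, and $uu\inv$ is the idempotent $e$ with Munn tree $T(uu\inv) = T(u)$. The key point is to understand how $\varphi$ acts on an idempotent in terms of Munn trees: if $e_S$ denotes the idempotent whose Munn tree (equivalently, prefix-closed vertex set in $R_A$) is $S$, then $\varphi$ is determined by the images of the generators, and one checks that $T(e_S \varphi) = (S)\oo\varphi$-closure, i.e. the prefix closure of $\{\,\oo{v\varphi} : v \in \wt A^*, \oo v \in S\,\}$ — more precisely $T(w\varphi)$ is built from $T(w)$ by replacing each edge label $a$ by the path $a\varphi$ and taking the induced subgraph of $C_A$; in particular the vertex set $T(w\varphi)$ contains $(T(w))\oo\varphi$. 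Since $e = uu\inv \in \Fix(\varphi)$ means $T(u) = T(e) = T(e\varphi) \supseteq (T(e))\oo\varphi = (T(u))\oo\varphi$, we conclude $(T(u))\oo\varphi \subseteq T(u)$.

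The main obstacle will be making precise the relationship $T(w\varphi) \supseteq (T(w))\oo\varphi$ between the Munn tree of $w\varphi$ and the image of $T(w)$ under $\oo\varphi$; this requires carefully tracing a geodesic in $\MT(w)$ through the substitution induced by $\varphi$ and observing that the reduced form of the image of any vertex of $\MT(w)$ appears as a vertex of $\MT(w\varphi)$. Everything else reduces to the standard description of $M_A$ via birooted Munn trees and the compatibility of $\pi$, $\sigma'$, $\varphi$ and $\oo\varphi$.
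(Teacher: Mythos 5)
Your proof of (i) $\Rightarrow$ (ii) matches the paper. Your proof of (ii) $\Rightarrow$ (i) is correct but follows a different route: the paper argues via the natural partial order, first deducing from $\oo{u}\,\oo{u}\inv \geq uu\inv$ that $\oo{u}\varphi = e\oo{u}$ for some idempotent $e \geq uu\inv$, and then computing $u\varphi = (uu\inv\oo{u})\varphi = uu\inv e \oo{u} = u$ directly; you instead use the characterization that an element of $M_A$ is uniquely determined by the pair $(T(\cdot), \oo{\,\cdot\,})$, showing $T(u\varphi) = T((u\varphi)(u\varphi)\inv) = T((uu\inv)\varphi) = T(uu\inv) = T(u)$ and $\oo{u\varphi} = \oo{u}$. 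Both arguments are valid and of comparable length; yours avoids introducing the auxiliary idempotent $e$.

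The final assertion is where you have a genuine gap, which you flag yourself. Your argument reduces to the claim that $(T(w))\oo{\varphi} \subseteq T(w\varphi)$ for arbitrary $w$, and you say only that ``one checks'' this by tracing geodesics through the substitution $a \mapsto a\varphi$, calling it ``the main obstacle.'' That claim is true, but you have not established it, and the geometric sketch you give (``replace each edge label $a$ by the path $a\varphi$ and take the induced subgraph'') would take real work to make rigorous. The cleaner route — which is what the paper does, directly in the special case $w = u \in \Fix(\varphi)$ — is order-theoretic: for $v \in T(u)$ one has $vv\inv \geq uu\inv$ in the natural partial order (since $T(v) \subseteq T(u)$ and $\oo{vv\inv} = 1 = \oo{uu\inv}$), hence $(vv\inv)\varphi \geq (uu\inv)\varphi = uu\inv$, giving $T(v\varphi) \subseteq T(uu\inv) = T(u)$, and then $v\oo{\varphi} = \oo{v\varphi} \in T(v\varphi) \subseteq T(u)$. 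This bypasses the general inclusion you invoke and needs no detour through the combinatorics of $\MT(w\varphi)$. If you want to keep your structure, you should at least supply this order argument (applied to an arbitrary $w$) as the proof of the inclusion you assert; as written, the ``moreover'' part of the lemma is unproved.
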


\begin{proof}
(i) $\Rightarrow$ (ii). The first part follows from $\Fix(\varphi)$ being an
inverse submonoid of $M_A$. On the other hand, we have
$$\oo{u}\,\oo{\varphi} = \oo{u}(\sigma')\inv\varphi\sigma' =
u\varphi\sigma' = u\sigma' = \oo{u}.$$

(ii) $\Rightarrow$ (i). We have $\oo{u}\,\oo{u}\inv \geq uu\inv$,
hence  $(\oo{u}\,\oo{u}\inv)\varphi \geq (uu\inv)\varphi =
uu\inv$. It follows that $\oo{u}\varphi = e\oo{u}$ for
some idempotent $e \geq uu\inv$ and so
$$u\varphi = (uu\inv \oo{u})\varphi = uu\inv e\oo{u} =
uu\inv \oo{u} = u.$$

Finally, let $v \in T(u)$. Then $vv\inv
\geq uu\inv$ and $uu\inv \in \Fix(\varphi)$ yields
$(vv\inv)\varphi \geq (uu\inv)\varphi =
uu\inv$. Hence $T(v\varphi) = T((vv\inv)\varphi)
\subseteq T(uu\inv) = T(u)$. Thus
$$v\oo{\varphi} =
v(\sigma')\inv\varphi\sigma' = v\varphi\sigma' \in
T(v\varphi) \subseteq T(u)$$
as required.
\end{proof}

Let $\varphi \in \End(M_A)$. An idempotent $e\in E(M_A)$ is said to be
$\varphi$-\emph{stable} (or
simply stable when the endomorphism $\varphi$ is clear from the
context) whenever the {\em orbit} $\{ e\varphi^n \mid n \geq 0\}$ is
finite. It follows that  $\{ e\varphi^n \mid n \geq 0\} = \{ e,
e\varphi, \ldots, e\varphi^{p-1} \}$ for some $p \geq 1$. We call the smallest
such $p$ the \emph{period} of $e$ with respect to $\varphi$.
By minimality of $p$, the idempotents
$e, e\varphi, e\varphi^{2},\ldots, e\varphi^{p-1}$ are all
distinct.
Finally, we define
$$
  \mathcal{K}_{\varphi}(e)=\prod_{i=0}^{p-1} e\varphi^{i},
$$
$$\St_{\varphi} = \{ a \in \wt{A} \mid aa\inv \mbox{ is
  $\varphi$-stable} \}.
$$
We show that $\St_{\varphi} \neq \emptyset$ if
there exist nontrivial fixed points, but first we prove a simple
lemma. Given $u,v \in \wt{A}^*$ and a vertex $p$ of $\MT(u)$, we say
that $\MT(v)$ embeds in $\MT(u)$ at $p$ if $\oo{pT(v)} \subseteq
T(u)$.

\begin{lemma}\label{lem: action on edges}
  Let $\varphi\in \End(M_A)$ and $u \in M_A$. Let $p
  \mapright{v} q$ be a path in $\MT(u)$. Then
  $\MT(v\varphi)$ embeds in $\MT(u\varphi)$ at
  $p\oo{\varphi}$.
\end{lemma}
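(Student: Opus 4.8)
The plan is to argue by inspecting what $\varphi$ does to the path $p\mapright{v}q$ inside $\MT(u)$. Since $p\mapright{v}q$ is a path in $\MT(u)$, every vertex traversed lies in $T(u)$, and in particular the word $p\inv u$ (more precisely, a reduced word representing a suitable conjugate/translate) labels a path through $\MT(u)$; the key structural fact I want to use is that $v\geq p\inv q$-type relations translate into membership statements of the form $\oo{pT(v)}\subseteq T(u)$, which is exactly the embedding hypothesis unwound. So the first step is to restate ``$p\mapright{v}q$ is a path in $\MT(u)$'' as the assertion that $\MT(v)$ embeds in $\MT(u)$ at $p$, i.e.\ $\oo{pT(v)}\subseteq T(u)$. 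This is essentially a definitional move together with the basic property $T(uu\inv)=T(u)$ and the description of $\MT(u)$ as the graph read along the path $1\mapright{u}g$.

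The second step is to apply $\varphi$ and use the multiplicativity of the Munn-tree construction under $\pi'$. Write $u = w_1 v' w_2$ at the level of $\wt A^*$ where reading $w_1$ lands at $p$ and reading $w_1v'$ lands at $q$, with $v' $ a representative of the labelled path $v$; then $u\pi = (w_1\pi)(v'\pi)(w_2\pi)$ and applying $\varphi$ gives $u\varphi = (w_1\varphi)(v'\varphi)(w_2\varphi)$. Now use the formula $(\Gamma,g)(\Gamma',g')=(\Gamma\cup g\Gamma',gg')$ for the product in $M'_A$: the Munn tree $\MT(u\varphi)$ is the union $\MT(w_1\varphi)\,\cup\, \oo{w_1\varphi}\,\MT(v'\varphi)\,\cup\,\oo{w_1\varphi\,v'\varphi}\,\MT(w_2\varphi)$, and in particular $\oo{w_1\varphi}\,T(v'\varphi)\subseteq T(u\varphi)$. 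Since $\oo{w_1\varphi} = \oo{w_1}\,\oo{\varphi} = p\oo{\varphi}$ (using $\overline\varphi=(\sigma')\inv\varphi\sigma'$ and $\sigma=\pi\sigma'$), and $v'\varphi = v\varphi$ as elements of $M_A$, this says $\oo{(p\oo{\varphi})T(v\varphi)}\subseteq T(u\varphi)$, which is precisely the statement that $\MT(v\varphi)$ embeds in $\MT(u\varphi)$ at $p\oo{\varphi}$. One should also check that $p\oo\varphi$ is genuinely a vertex of $\MT(u\varphi)$, but this falls out of the same union formula since $\oo{w_1\varphi}$ is an endpoint of the first factor's tree, hence a vertex of the product's tree.

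The main obstacle I anticipate is bookkeeping rather than conceptual: one must be careful that the ``path $p\mapright{v}q$'' refers to a labelled walk in the inverse graph $\MT(u)$ and not literally to a factorization of a fixed representative word $u$, since a path in $\MT(u)$ may revisit vertices and backtrack. To handle this cleanly I would not pick a global factorization of $u$; instead I would work directly with the characterization $u\rho\le v\rho \iff T(u)\supseteq T(v)\wedge \oo u=\oo v$ and the automaton description $L(\mathcal A_u)=\{v\in\wt A^*\mid v\ge u\text{ in }M_A\}$. Concretely: the path $p\mapright{v}q$ being in $\MT(u)$ means that, for the birooted subtree $\MT(v)$ rooted at its own $1$ and $\oo v$, the translate $p\,\MT(v)$ sits inside $\MT(u)$; apply $\varphi$ to the inverse monoid element $p'v'p'\inv \cdot uu\inv$ (where $p'$ reads to $p$), note $\varphi$ is order-preserving and sends idempotents to idempotents, and the inclusion of Munn trees is preserved. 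The translate then moves from $p$ to $p\oo\varphi$ because $\oo\varphi$ is the induced free-group endomorphism and $T$-inclusions are compatible with the $\sigma'$-images. This last compatibility is exactly the content of Lemma~\ref{permv}'s final computation $v\oo\varphi = v\varphi\sigma'\in T(v\varphi)$, which I would cite rather than reprove.
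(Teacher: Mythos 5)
Your overall plan coincides with the paper's: exploit the fact that $\varphi$ is order-preserving on $M_A$, hence shrinks Munn trees, and then read off the embedding from the product formula $(\Gamma,g)(\Gamma',g')=(\Gamma\cup g\Gamma',gg')$. The paper realizes this more economically than you do. Since $1$, $p$, $q$ and $\oo u$ are all vertices of the connected tree $\MT(u)$, one can take $w$ to be the reduced label of a path from $q$ to $\oo u$ and observe that $1 \mapright{p} p \mapright{v} q \mapright{w} \oo u$ is a path in $\MT(u)$, so $pvw \geq u$ in $M_A$. Monotonicity of $\varphi$ gives $(pvw)\varphi \geq u\varphi$, hence $T\bigl((p\varphi)(v\varphi)(w\varphi)\bigr) = T\bigl((pvw)\varphi\bigr) \subseteq T(u\varphi)$, and the product formula together with $\oo{p\varphi}=p\oo\varphi$ immediately yields $\oo{(p\oo\varphi)T(v\varphi)}\subseteq T(u\varphi)$.

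Your worry about the word-level factorization $u=w_1v'w_2$ is well founded — a chosen representative of $u$ need not traverse the labelled walk $p\mapright{v}q$ — but your proposed fix leaves a genuine gap. The element $p'v'p'\inv\cdot uu\inv$ is not an idempotent unless $v$ is, and it is not in any evident natural-order relation with $u$ or $uu\inv$: the factor $uu\inv$ gets translated by $\oo{pvp\inv}$ when you multiply, so the resulting Munn tree is not obviously contained in $T(u)$, and applying $\varphi$ to it does not directly produce the desired inclusion into $T(u\varphi)$. If you want to stay with idempotents, the right element is $pvv\inv p\inv$, which satisfies $pvv\inv p\inv\geq uu\inv$ precisely because $p\in T(u)$ and $\oo{pT(v)}\subseteq T(u)$; applying $\varphi$ then gives $T\bigl((p\varphi)(v\varphi)(v\varphi)\inv(p\varphi)\inv\bigr)\subseteq T(u\varphi)$, whose left-hand side contains $\oo{(p\oo\varphi)T(v\varphi)}$. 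Either that choice or the paper's $pvw\geq u$ makes the argument a one-liner; as written, your element does not.
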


\begin{proof}
Considering $p$ in reduced form, there exists a path $1 \mapright{p} p
\mapright{v} q \mapright{w} \oo{u}$ in $\MT(u)$ and so $pvw \geq
u$ in $M_A$. Hence $(pvw)\varphi \geq u\varphi$ and so
$T((pvw)\varphi) \subseteq T(u\varphi)$. It follows that
$$\oo{(p\oo{\varphi})T(v\varphi)} = \oo{(p\varphi)
  T(v\varphi)} \subseteq
T((p\varphi)(v\varphi)(w\varphi)) =
T((pvw)\varphi) \subseteq T(u\varphi)$$
as claimed.
\end{proof}

\begin{prop}
\label{prop: edge are stable}
Let $\varphi\in\End(M_A)$ and let $u \in \Fix(M_A)$. If $a \in
\wt{A}$ labels some edge in $\MT(u)$, then $a \in \St_{\varphi}$.
\end{prop}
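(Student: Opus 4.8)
The plan is to follow the Munn trees $\MT(a\varphi^n)$ inside the fixed finite tree $\MT(u)$ and show that they assume only finitely many values; this forces the orbit of $aa\inv$ to be finite. Since $a$ labels an edge of $\MT(u)$, there is a one-edge path $p\mapright{a}q$ in $\MT(u)$, so that $p,q\in T(u)$ and $q=\oo{pa}$. Because $u\in\Fix(\varphi)$ we have $u\varphi^n=u$, hence $\MT(u\varphi^n)=\MT(u)$, for every $n\ge 0$; and since the bar operator is multiplicative (a routine check gives $\oo{\varphi^n}=\oo{\varphi}^{\,n}$), applying Lemma~\ref{lem: action on edges} to the endomorphism $\varphi^n$ and the path $p\mapright{a}q$ yields that $\MT(a\varphi^n)$ embeds in $\MT(u\varphi^n)=\MT(u)$ at $p\,\oo{\varphi}^{\,n}$. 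Equivalently, writing $g_n=p\,\oo{\varphi}^{\,n}\in F_A$, the translate $\Delta_n:=g_n\MT(a\varphi^n)$ is a connected subgraph of the finite graph $\MT(u)$ (here the tree structure of $C_A$ is used to pass from the vertex inclusion $\oo{g_nT(a\varphi^n)}\subseteq T(u)$ to an inclusion of graphs).

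Next I would bound the translations $g_n$. By Lemma~\ref{permv}, $(T(u))\oo{\varphi}\subseteq T(u)$, so $(T(u))\oo{\varphi}^{\,n}\subseteq T(u)$ for all $n$, and since $p\in T(u)$ this gives $g_n=p\,\oo{\varphi}^{\,n}\in T(u)$ for every $n\ge 0$; as $T(u)$ is finite, the set $\{g_n\mid n\ge 0\}$ is finite. Now $\varphi$ preserves inverses, so $(aa\inv)\varphi^n=(a\varphi^n)(a\varphi^n)\inv$, which (using $T(vv\inv)=T(v)$ together with the multiplication on $M'_A$) is the idempotent $(\MT(a\varphi^n),1)$; in particular $(aa\inv)\varphi^n$ is determined by $\MT(a\varphi^n)=g_n\inv\Delta_n$. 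Since $g_n$ ranges over the finite set $T(u)$ and $\Delta_n$ ranges over the finitely many connected subgraphs of $\MT(u)$, the element $(aa\inv)\varphi^n$ takes only finitely many values as $n$ varies. Hence the orbit $\{(aa\inv)\varphi^n\mid n\ge 0\}$ is finite, i.e.\ $aa\inv$ is $\varphi$-stable and $a\in\St_{\varphi}$.

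The routine parts are the appeal to Lemma~\ref{permv} and the translation bookkeeping via the Munn/Scheiblich correspondence; the crux is the iteration of Lemma~\ref{lem: action on edges}, and the point that needs care is the legitimacy of re-applying it at stage $n$ — one should check that once $\MT(a\varphi^n)$ is known to embed in $\MT(u)$, there is indeed a path inside $\MT(u)$ labelled by a word representing $a\varphi^n$ on which the lemma can be re-run (equivalently, one uses $\oo{\varphi^n}=\oo{\varphi}^{\,n}$ together with the tree structure of $C_A$, as above). An alternative is to set this up as an explicit induction on $n$, at each step re-reading a representative of $a\varphi^n$ inside $\MT(u)$ and applying Lemma~\ref{lem: action on edges} with $\varphi$.
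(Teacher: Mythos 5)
Your proof is correct and takes essentially the same approach as the paper: both apply Lemma~\ref{lem: action on edges} to the iterate $\varphi^n$ and the edge $p\mapright{a}q$, obtain an embedding of $\MT(a\varphi^n)=\MT((aa\inv)\varphi^n)$ into the fixed finite tree $\MT(u)$ at $p\,\oo{\varphi}^{\,n}$, and conclude that the orbit of $aa\inv$ is finite. You are merely more explicit than the paper about the last step (tracking the finitely many possible translations $g_n\in T(u)$ and the finitely many possible embedded subgraphs), which the paper compresses into "this bounds the size of $\MT((aa\inv)\varphi^n)$, hence the orbit is finite."
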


\begin{proof}
 Let $p\mapright{a}q$ be an edge of $\MT(u)$. By Lemma \ref{lem:
   action on edges},
 $\MT((aa\inv)\varphi^n)$ embeds in $\MT(u\varphi^n) = \MT(u)$ at
 $p\oo{\varphi^n}$ for every $n\ge 0$. This bounds the size of
 $\MT((aa\inv)\varphi^n)$, hence the orbit of $aa\inv$ is finite and
 so $a \in \St_{\varphi}$.
\end{proof}

We prove yet another simple lemma:

\begin{lemma}
\label{stablepower}
Let $\varphi\in\End(M_A)$ and let $n \ge 1$. Then
$\St_{\varphi^n} = \St_{\varphi}$.
\end{lemma}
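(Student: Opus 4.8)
The plan is to prove the two inclusions separately, and both amount to comparing the orbit of an idempotent $aa\inv$ under $\varphi$ with its orbit under $\varphi^n$. Recall that $a \in \St_{\varphi}$ precisely when the orbit $\{(aa\inv)\varphi^k \mid k \geq 0\}$ is finite, and similarly $a \in \St_{\varphi^n}$ precisely when $\{(aa\inv)(\varphi^n)^k \mid k \geq 0\} = \{(aa\inv)\varphi^{nk} \mid k \geq 0\}$ is finite. The key observation is that the latter set is a subset of the former, being the subsequence indexed by multiples of $n$.

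First I would establish $\St_{\varphi} \subseteq \St_{\varphi^n}$. If $a \in \St_{\varphi}$, then $\{(aa\inv)\varphi^k \mid k \geq 0\}$ is finite; since $\{(aa\inv)\varphi^{nk} \mid k \geq 0\}$ is a subset of it, it is also finite, so $a \in \St_{\varphi^n}$. For the reverse inclusion $\St_{\varphi^n} \subseteq \St_{\varphi}$, suppose $a \in \St_{\varphi^n}$, so there exists $q \geq 1$ with $(aa\inv)\varphi^{nk} = (aa\inv)\varphi^{n(k+q)}$ for all large $k$; in particular the orbit of $aa\inv$ under $\varphi^n$ has some finite size $m$. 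Now consider the full $\varphi$-orbit $\{(aa\inv)\varphi^k \mid k \geq 0\}$. Writing each $k = nj + r$ with $0 \leq r < n$, we have $(aa\inv)\varphi^k = ((aa\inv)\varphi^r)\varphi^{nj}$, which lies in the $\varphi^n$-orbit of the single idempotent $(aa\inv)\varphi^r$. Since there are only $n$ possible residues $r$, and each $\varphi^n$-orbit of $(aa\inv)\varphi^r$ is finite (being the $\varphi^n$-orbit of an idempotent that lies in the finite $\varphi$-image set — more directly, it is eventually periodic because $(aa\inv)\varphi^r \varphi^{nm}$ eventually cycles by the same argument applied to $(aa\inv)\varphi^r$ in place of $aa\inv$), the whole $\varphi$-orbit is contained in a finite union of finite sets, hence finite. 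Thus $a \in \St_{\varphi}$.

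The only point requiring a little care is the claim that the $\varphi^n$-orbit of $(aa\inv)\varphi^r$ is finite for each $r$. This follows because $aa\inv$ being $\varphi^n$-stable means $(aa\inv)\varphi^{n(k+q)} = (aa\inv)\varphi^{nk}$ for all $k$ at least some threshold, and applying $\varphi^r$ to this equality (for the appropriate threshold) shows the orbit of $(aa\inv)\varphi^r$ under $\varphi^n$ is eventually periodic, hence finite — alternatively, one notes that each $(aa\inv)\varphi^r$ is itself an idempotent of $M_A$ and the argument for $a$ applies verbatim once we know its $\varphi^n$-orbit is finite, which is immediate from $(aa\inv)\varphi^r\varphi^{nk} = ((aa\inv)\varphi^{nk})\varphi^r$ taking values in the finite set $\{(aa\inv)\varphi^{nk} \mid k\ge 0\}$ mapped through $\varphi^r$. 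I expect no real obstacle here; the lemma is genuinely of the "simple lemma" variety the authors advertise, and the whole content is the bookkeeping of residues modulo $n$.
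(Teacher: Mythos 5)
Your proof is correct and essentially matches the paper's. For $\St_{\varphi}\subseteq\St_{\varphi^n}$ both arguments simply note that the $\varphi^n$-orbit is a subset of the $\varphi$-orbit. For the converse, the paper extracts a relation $(aa\inv)\varphi^{nk}=(aa\inv)\varphi^{n(k+p)}$ from finiteness of the $\varphi^n$-orbit and declares the full $\varphi$-orbit finite (implicitly because applying $\varphi^j$ to both sides makes the whole sequence eventually periodic), whereas you partition the exponents by residue mod $n$ and observe that each residue class contributes the $\varphi^r$-image of the finite $\varphi^n$-orbit of $aa\inv$; these are just two tidy ways of doing the same bookkeeping, and yours is, if anything, slightly more explicit about why the conclusion holds.
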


\begin{proof}
Let $a \in \St_{\varphi^n}$. Then $(aa\inv)\varphi^{nk} = (aa\inv)\varphi^{n(k+p)}$
for some $k \geq 0$ and $p \geq 1$. Hence the $\varphi$-orbit of
$aa\inv$ is finite and so $a \in \St_{\varphi}$. Since the $\varphi$-orbit of
$aa\inv$ contains the $\varphi^n$-orbit, the converse implication follows.
\end{proof}

The set of $\varphi$-tiles is defined by
$$\mathcal{T}_{\varphi} = \{ \mathcal{K}_{\varphi}(aa\inv)a \mid a \in
\St_{\varphi} \}.$$

\begin{theorem}\label{theo: product of phi-tiles}
Let $\varphi \in \End(M_A)$ be such that $\Curl(\oo{\varphi}) =
1$. Then $\Fix(\varphi) = \mathcal{T}_{\varphi}^*$.
\end{theorem}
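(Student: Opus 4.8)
The plan is to establish the two inclusions $\mathcal{T}_\varphi^*\subseteq\Fix(\varphi)$ and $\Fix(\varphi)\subseteq\mathcal{T}_\varphi^*$ separately, the second carrying the real weight. For the first, since $\Fix(\varphi)$ is a submonoid it suffices to check that every tile $t=\mathcal{K}_\varphi(aa\inv)\,a$ with $a\in\St_\varphi$ lies in $\Fix(\varphi)$. I would first record that $a\,\oo\varphi=a$: the orbit of $aa\inv$ being finite, the trees $\MT((aa\inv)\varphi^n)=\MT(a\varphi^n)$ take only finitely many values, so the orbit $\{a\,\oo\varphi^{\,n}:n\ge0\}$ is finite too (its $n$-th term lies in $T(a\varphi^n)$), and $\Curl(\oo\varphi)=1$ then forces $a\in\Per(\oo\varphi)=\Fix(\oo\varphi)$. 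Next, writing $p$ for the period of $aa\inv$ and $K=\mathcal{K}_\varphi(aa\inv)$, the computation $K\varphi=\prod_{i=1}^{p}(aa\inv)\varphi^{i}=\prod_{i=0}^{p-1}(aa\inv)\varphi^{i}=K$ (using $(aa\inv)\varphi^{p}=aa\inv$ and that idempotents commute) shows $K$ is a $\varphi$-fixed idempotent. Then $tt\inv=K(aa\inv)K=K\in\Fix(\varphi)$ and $\oo t=\oo K\,\oo a=a\in\Fix(\oo\varphi)$, so Lemma~\ref{permv} gives $t\in\Fix(\varphi)$, whence $\mathcal{T}_\varphi^*\subseteq\Fix(\varphi)$.

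For the reverse inclusion the engine is a ``stamping'' description of products of tiles. For $c_1,\dots,c_m\in\St_\varphi$ put $g_i=c_1\cdots c_i$ (reduced form) for $0\le i\le m$; from the multiplication rule in $M_A$, together with $\MT(c_j)\subseteq\MT(\mathcal{K}_\varphi(c_jc_j\inv))$ (as $c_jc_j\inv$ is a factor of $\mathcal{K}_\varphi(c_jc_j\inv)$), one obtains
$$\MT\big(\mathcal{K}_\varphi(c_1c_1\inv)c_1\cdots\mathcal{K}_\varphi(c_mc_m\inv)c_m\big)=\bigcup_{i=0}^{m-1}g_i\,\MT\big(\mathcal{K}_\varphi(c_{i+1}c_{i+1}\inv)\big),$$
and when $g_m=1$ this product is exactly the idempotent carrying that tree. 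I would first treat an idempotent $f\in\Fix(\varphi)$, $f\neq1$ (the case $f=1$ being trivial). Every letter labelling an edge of $\MT(f)$ lies in $\St_\varphi$ by Proposition~\ref{prop: edge are stable}, hence in $\Fix(\oo\varphi)$ by the first paragraph. Take a depth-first traversal of the finite tree $\MT(f)$ based at $1$: a closed walk $1=g_0,g_1,\dots,g_m=1$ with $g_{i+1}=g_ic_{i+1}$, $c_{i+1}\in\wt A$, crossing every edge. Since each $c_{i+1}$ is a stable, $\oo\varphi$-fixed letter, every $g_i=c_1\cdots c_i$ is $\oo\varphi$-fixed; iterating Lemma~\ref{lem: action on edges} along the edge $g_i\to g_{i+1}$ of $\MT(f)$ then shows that each $g_i\,\MT((c_{i+1}c_{i+1}\inv)\varphi^{\,j})$ lies in $\MT(f\varphi^{\,j})=\MT(f)$, hence $g_i\,\MT(\mathcal{K}_\varphi(c_{i+1}c_{i+1}\inv))\subseteq\MT(f)$; thus every stamp sits inside $\MT(f)$. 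Conversely the stamp at $g_i$ contains the edge $g_i\to g_{i+1}$, so the stamps cover all the edges, hence all of $\MT(f)$; by the displayed identity the corresponding product of tiles equals $f$, so $f\in\mathcal{T}_\varphi^*$.

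For a general $u\in\Fix(\varphi)$, write $\oo u=a_1\cdots a_k$ in reduced form; each $a_i$ labels an edge of $\MT(u)$, so $a_i\in\St_\varphi\cap\Fix(\oo\varphi)$, and $v:=\mathcal{K}_\varphi(a_1a_1\inv)a_1\cdots\mathcal{K}_\varphi(a_ka_k\inv)a_k\in\mathcal{T}_\varphi^*$ has $\oo v=\oo u$, while Lemma~\ref{lem: action on edges} (the base points $a_1\cdots a_i$ being $\oo\varphi$-fixed) gives $T(v)\subseteq T(u)$. Hence $u\le v$ in $M_A$, so $u=uu\inv v$; since $uu\inv$ is an idempotent of $\Fix(\varphi)$ it lies in $\mathcal{T}_\varphi^*$ by the previous step, whence $u=uu\inv v\in\mathcal{T}_\varphi^*$. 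The main obstacle is the idempotent case, and within it the verification that the depth-first stamps cover $\MT(f)$ \emph{exactly}: the inclusion $\supseteq$ is immediate, whereas $\subseteq$ forces one to pin down the position of each translated tree $g_i\,\MT(\mathcal{K}_\varphi(c_{i+1}c_{i+1}\inv))$, which is precisely where Lemma~\ref{lem: action on edges} and the equalities $g_i\,\oo\varphi=g_i$ — ultimately the hypothesis $\Curl(\oo\varphi)=1$, via Proposition~\ref{prop: edge are stable} — are indispensable; a lighter but still necessary point is the bookkeeping $\MT(\mathcal{K}_\varphi(cc\inv)c)=\MT(\mathcal{K}_\varphi(cc\inv))$, which makes the stamps literally the Munn trees of the $\mathcal{K}_\varphi$'s.
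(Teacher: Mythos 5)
Your proof of the inclusion $\mathcal{T}_\varphi^*\subseteq\Fix(\varphi)$ is essentially the paper's: use Lemma~\ref{lem: action on edges} to bound the $\oo\varphi$-orbit of $a$ (the paper bounds it inside $\MT(t)$; you bound it inside the finitely many trees $\MT(a\varphi^n)$ --- same effect), invoke $\Curl(\oo\varphi)=1$ to get $a\in\Fix(\oo\varphi)$, check that $\mathcal{K}_\varphi(aa\inv)$ is $\varphi$-fixed, and finish with Lemma~\ref{permv}.

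For the reverse inclusion your route is genuinely different in organisation, though it rests on the same two lemmas. The paper takes \emph{any} word $a_1\cdots a_m\in\wt A^*$ representing $u$ (not assumed reduced), forms the corresponding tiles $t_i=\mathcal K_\varphi(a_ia_i\inv)a_i$, and shows $u=t_1\cdots t_m$ directly: Lemma~\ref{lem: action on edges} and $\Curl(\oo\varphi)=1$ force each prefix $\oo{a_1\cdots a_{i-1}}$ to be $\oo\varphi$-fixed, giving $T(t_1\cdots t_m)\subseteq T(u)$, while $t_i\le a_i$ gives the reverse containment, and $\oo{t_1\cdots t_m}=\oo u$ closes it. You instead first handle the idempotent $f=uu\inv$ via a depth-first closed walk of $\MT(f)$, proving the ``stamping'' identity and arguing that the stamps both lie inside $\MT(f)$ (by the same $\oo\varphi$-fixedness of the walk prefixes) and cover every edge; then you factor $u=uu\inv v$ with $v$ the product of tiles along the reduced form of $u$. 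This is correct, and the stamping picture makes the geometry transparent, but it is strictly more work than the paper's observation that \emph{any} word spelling $u$ --- a closed DFS walk concatenated with the reduced form being just one choice --- already produces the needed product of tiles in one pass, with no separate idempotent case. In short: same key lemmas, same hypotheses used in the same places, but your decomposition into ``idempotent part'' plus ``reduced part'' replaces the paper's uniform ``one walk, one product of tiles'' argument; the latter is shorter because it avoids having to prove that the DFS stamps cover $\MT(f)$ exactly.
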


\begin{proof}
Let $a \in St_{\varphi}$ and $t =
\mathcal{K}_{\varphi}(aa\inv)a$. Let $p$ denote the
period of $aa\inv$. Since $aa\inv \geq
\mathcal{K}_{\varphi}(aa\inv)$, we have $tt\inv = \mathcal{K}_{\varphi}(aa\inv)$.
By Lemma \ref{lem: action on edges} applied to $\varphi^n$ we know that
$\MT(a\varphi^n)$ embeds in $\MT((aa\inv)\varphi^n)$ and therefore in
$\MT(t)$. Thus
$a\oo{\varphi^n}$ labels a path in $\MT(t)$ for every $n$. Since
$\MT(t)$ is a finite tree, it admits only finitely many paths of
reduced label, hence the orbit $\{ a\oo{\varphi^n} \mid n \geq 0 \}$
must be finite. Since $\oo{\varphi^n} = \oo{\varphi}^n$, it follows
that $a \in \Per(\oo{\varphi})$ and so $a \in \Fix(\oo{\varphi})$
since $\Curl(\oo{\varphi}) = 1$. Hence $aa\inv \geq (aa\inv)\varphi$
and so
$$(tt\inv)\varphi = (\prod_{i=0}^{p-1} (aa\inv)\varphi^{i})\varphi =
\prod_{i=1}^{p} (aa\inv)\varphi^{i} = \prod_{i=0}^{p-1}
(aa\inv)\varphi^{i} = tt\inv.$$
Since $\oo{t} = a \in \Fix(\oo{\varphi})$, it follows from Lemma
\ref{permv} that $t \in \Fix(\varphi)$. Hence
$\mathcal{T}_{\varphi} \subseteq \Fix(\varphi)$ and so $\mathcal{T}_{\varphi}^*
 \subseteq \Fix(\varphi)$.

Conversely, let $u \in \Fix(\varphi)$. Write $u = a_1 \ldots a_m$
with $a_i \in \wt{A}$. For $i = 1,\ldots,m$, let $t_i =
\mathcal{K}_{\varphi}(a_ia_i\inv)a_i$. By Proposition \ref{prop: edge
  are stable}, we have $a_i \in \St_{\varphi}$ and so $t_i \in
\mathcal{T}_{\varphi}$ for every $i$.

We show that
$u = t_1\ldots t_m$ in $M_A$. Indeed, by Lemma \ref{lem: action on
  edges}, $\MT((a_ia_i\inv)\varphi^n)$ embeds in $\MT(u\varphi^n) =
\MT(u)$ at $(\oo{a_1\ldots a_{i-1}})\oo{\varphi}^n$ for every $n \ge
0$. Hence the orbit $\{ (\oo{a_1\ldots a_{i-1}})\oo{\varphi}^n \mid n
\ge 0 \}$ must be finite and since $\Curl(\oo{\varphi}) = 1$ we get
$\oo{a_1\ldots a_{i-1}} \in \Per(\oo{\varphi}) =
\Fix(\oo{\varphi})$. Thus $(\oo{a_1\ldots a_{i-1}})\oo{\varphi}^n = \oo{a_1\ldots
  a_{i-1}}$ and it follows easily that $\MT(t_i)$ embeds in $\MT(u)$
at $\oo{t_1\ldots t_{i-1}}$ and so $T(t_1\ldots t_m) \subseteq T(u)$.

Now, since  $a_i = \oo{t_i}$, it is not difficult to check
that $t_1...t_m\le u$, from which the converse inclusion follows as well. Together with $\oo{t_1\ldots t_m} =
\oo{a_1\ldots a_m} = \oo{u}$, this implies $u = t_1\ldots t_m$ in
$M_A$. Therefore $\Fix(\varphi) = \mathcal{T}_{\varphi}^*$.
\end{proof}

We can now solve completely the case of periodic points:

\begin{theorem}\label{perio}
Let $\varphi \in \End(M_A)$. Then $\Per(\varphi)$ is finitely generated.
\end{theorem}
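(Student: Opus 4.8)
The plan is to reduce to the situation already settled in Theorem \ref{theo: product of phi-tiles}. Given $\varphi \in \End(M_A)$, set $N = \Curl(\oo{\varphi})$, which exists by Proposition \ref{perfix}. The key observation is that $\Per(\varphi)$ should coincide with $\Fix(\varphi^{N!})$, or at least with $\Fix(\varphi^{Nk})$ for a suitably chosen $k$: since periodic points of $\varphi$ are exactly the points fixed by some power $\varphi^n$, and since on the free group side Lemma \ref{redcurl} tells us $\Curl(\oo{\varphi}^{Nk}) = 1$ for every $k \ge 1$, passing to such a power makes the free-group dynamics ``stationary''. So first I would argue that there is a single exponent $m$ (a multiple of $N$, of the shape $N \cdot (\text{something}!)$ to absorb all relevant periods of stable idempotents) with $\Per(\varphi) = \Fix(\varphi^m)$. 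For the inclusion $\supseteq$ this is immediate; for $\subseteq$ one takes $u \in \Per(\varphi)$, so $u \in \Fix(\varphi^n)$ for some $n$, and wants $u \in \Fix(\varphi^m)$ — this requires $m$ to be a multiple of $n$, so one must first bound the possible periods $n$ that can occur. Here is where $||u||$ and the finiteness of Munn trees enters: by Lemma \ref{permv} applied to $\varphi^n$, $u \in \Fix(\varphi^n)$ forces $(T(u))\oo{\varphi}^n \subseteq T(u)$ and $u u^{-1} \in \Fix(\varphi^n)$, and by Proposition \ref{prop: edge are stable} every letter labelling an edge of $\MT(u)$ lies in $\St_{\varphi^n} = \St_{\varphi}$ (Lemma \ref{stablepower}); so the period of each such $aa^{-1}$ is bounded by a constant depending only on $\varphi$, and $n$ can be taken to be (a divisor of) the lcm of these periods times $N$.

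Once $\Per(\varphi) = \Fix(\varphi^m)$ is established, set $\psi = \varphi^m$. By Lemma \ref{stablepower}, $\St_{\psi} = \St_{\varphi}$, which is finite (it is a subset of $\wt{A}$). By Lemma \ref{redcurl}, $\Curl(\oo{\varphi}^m) = \Curl(\oo{\psi}) = 1$, provided $m$ was chosen as a multiple of $N$ — which it was. Therefore Theorem \ref{theo: product of phi-tiles} applies to $\psi$ and gives $\Per(\varphi) = \Fix(\psi) = \mathcal{T}_{\psi}^*$, a submonoid generated by the finite set $\mathcal{T}_{\psi} = \{ \mathcal{K}_{\psi}(aa^{-1})a \mid a \in \St_{\psi}\}$. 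Hence $\Per(\varphi)$ is finitely generated.

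The main obstacle I anticipate is the bookkeeping in the first step: pinning down a uniform exponent $m$ that simultaneously (a) is a multiple of $N$, so that the curl drops to $1$, and (b) is a multiple of every period $n$ for which $\Fix(\varphi^n)$ can contribute a new periodic point. Item (b) is not a priori bounded just from the definition of $\Per$, so one genuinely needs the geometric input: a periodic point $u$ lives inside its own finite Munn tree, that tree is $\varphi$-invariant (Lemma \ref{permv}), and the relevant idempotents $aa^{-1}$ are $\varphi$-stable with period at most the number of distinct sub-inverse-graphs that can sit inside $\MT(u)$ — but even $||u||$ itself is not bounded over all of $\Per(\varphi)$. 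The clean fix is to note that the period of $aa^{-1}$ depends only on $a$ and $\varphi$, not on $u$: since $\St_{\varphi}$ is finite, $p_0 := \lcm\{\,\text{period of } aa^{-1} : a \in \St_{\varphi}\,\}$ is a well-defined constant, and one checks that any periodic point is already fixed by $\varphi^{N p_0}$ (using that the orbit of $aa^{-1}$ under $\varphi$ has length dividing $p_0$, together with $\oo{u} \in \Per(\oo{\varphi}) = \Fix(\oo{\varphi}^{N})$). So the correct choice is $m = N p_0$, and with that choice both (a) and (b) hold and the argument closes.
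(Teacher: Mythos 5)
Your overall strategy is related to the paper's but diverges at a key point, and the divergence is where the gap sits. The paper does not try to exhibit a single exponent $m$ with $\Per(\varphi)=\Fix(\varphi^m)$; it proves the identity
$\Per(\varphi)=\bigl(\bigcup_{k\ge 1}\mathcal{T}_{\varphi^{Nk}}\bigr)^*$
with $k$ varying with $u$, and then observes that the \emph{union} of tile sets is finite because every tile has the form $\mathcal{K}_{\varphi^{Nk}}(aa\inv)a$ with $a\in\wt A$ and $\mathcal{K}_{\varphi^{Nk}}(aa\inv)$ a (commuting) product of idempotents drawn from the single finite $\varphi$-orbit of $aa\inv$. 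That sidesteps entirely the question of a uniform exponent. You instead try to pin down $m$ in advance and then apply Theorem \ref{theo: product of phi-tiles} once, which is a legitimate alternative plan, but your justification for $\Per(\varphi)=\Fix(\varphi^{Np_0})$ does not close.

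The specific gap: you argue that since the $\varphi$-orbit of $aa\inv$ has length dividing $p_0$, a periodic point built from such letters is fixed by $\varphi^{Np_0}$. But the ``period'' of a stable idempotent $e$ in this paper is the \emph{size of its orbit} $\{e,e\varphi,\dots,e\varphi^{p-1}\}$, and nothing in the definition forces $e\varphi^p=e$; a priori the orbit can be a ``rho'' (eventually periodic with a nontrivial pre-period), in which case $e\varphi^{p_0}\ne e$ even when $p\mid p_0$. So ``orbit length divides $p_0$'' does not yield ``fixed by $\varphi^{p_0}$''. To make an argument of this flavour work you need the structural input that, for the letters $a$ that actually appear in a periodic point, $a\in\Fix(\oo\varphi^N)$ and hence $aa\inv\ge(aa\inv)\varphi^N$, so the $\varphi^N$-orbit of $aa\inv$ is a \emph{decreasing chain}; its stable tail is then genuinely fixed by $\varphi^N$, and one can conclude (via Lemma \ref{permv} applied to $\varphi^N$) that each tile lies in $\Fix(\varphi^N)$, giving $\Per(\varphi)=\Fix(\varphi^N)$ outright — no $p_0$ needed. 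That decreasing-chain mechanism is implicitly what powers Theorem \ref{theo: product of phi-tiles}, but you never invoke it, and ``one checks that any periodic point is already fixed by $\varphi^{Np_0}$'' is precisely the step that cannot be left unchecked: as written it rests on treating the orbit as a pure cycle, which is unjustified.

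There is also a smaller looseness worth flagging: you move between ``the period of $a_ia_i\inv$'' and ``the period of $uu\inv$'' as though the latter were controlled by an lcm of the former, but $uu\inv$ is a product over all edges of $\MT(u)$ based at varying vertices and its $\varphi$-orbit size is not obviously bounded by $\lcm\{p_a\}$. This is another reason the paper prefers to argue at the level of tiles inside $\mathcal{T}_{\varphi^{Nk}}$ rather than at the level of whole idempotents $uu\inv$.
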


\begin{proof}
Let $N = \Curl(\oo{\varphi})$. We show that
\begin{equation}
\label{perio1}
\Per(\varphi) = (\bigcup_{k \geq 1} \mathcal{T}_{\varphi^{Nk}})^*.
\end{equation}
Indeed, let $u \in \Per(\varphi)$. Then $u = u\varphi^k$ for some $k
\ge 1$ and so $u = u\varphi^{Nk}$. By Lemma \ref{redcurl}, we
have $\Curl(\oo{\varphi^{Nk}}) = 1$, hence
$u \in \Fix(\varphi^{Nk}) = \mathcal{T}_{\varphi^{Nk}}^*$ by Theorem
\ref{theo: product of phi-tiles}.

Conversely, we have $\mathcal{T}_{\varphi^{Nk}} \subseteq
\Fix(\varphi^{Nk}) \subseteq \Per(\varphi)$ for every $k$ by Theorem
\ref{theo: product of phi-tiles}. Since $\Per(\varphi)$ is a
submonoid, (\ref{perio1}) holds.

Now it suffices to show that $B = \bigcup_{k \geq 1}
\mathcal{T}_{\varphi^{Nk}}$ is finite. Indeed, if $t \in
\mathcal{T}_{\varphi^{Nk}}$, then $t = \mathcal{K}_{\varphi^{Nk}}(aa\inv)a$
for some $a \in \St_{\varphi^{Nk}}$. Since there are only finitely
many choices for $a \in \wt{A}$ and the $\varphi$-orbit of
$aa\inv$ is finite for every such $a$, it follows that $B$ is finite
and so $\Per(\varphi)$ is finitely generated.
\end{proof}

\begin{cor}
\label{ppff}
Let $\varphi \in \End(M_A)$. The following conditions are
equivalent:
\begin{itemize}
\item[(i)]
$\Fix(\varphi)$ is infinite;
\item[(ii)]
$\Per(\varphi)$ is infinite;
\item[(iii)]
$\Per(\varphi) \not\subseteq E(M_A)$.
\end{itemize}
\end{cor}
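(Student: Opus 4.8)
The plan is to establish the cycle of implications (i) $\Rightarrow$ (ii) $\Rightarrow$ (iii) $\Rightarrow$ (i); along the way one sees that each condition is in fact equivalent to $\St_{\varphi}\neq\emptyset$. The first implication is immediate, since $\Fix(\varphi)=\Fix(\varphi^1)\subseteq\Per(\varphi)$. For (ii) $\Rightarrow$ (iii) I would argue by contraposition: assuming $\Per(\varphi)\subseteq E(M_A)$ and recalling that an element of $M_A$ is idempotent precisely when it maps to $1$ in $F_A$, every element of $\Per(\varphi)$ has trivial image in $F_A$. But by~(\ref{perio1}) we have $\Per(\varphi)=(\bigcup_{k\ge 1}\mathcal{T}_{\varphi^{Nk}})^*$, and each generator $\mathcal{K}_{\varphi^{Nk}}(aa\inv)a$ has image $a\neq 1$ in $F_A$ (the factor $\mathcal{K}_{\varphi^{Nk}}(aa\inv)$ being idempotent). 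Hence the union of the sets $\mathcal{T}_{\varphi^{Nk}}$ must be empty, so $\Per(\varphi)=\{1\}$ is finite and (ii) fails.

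The core of the argument is (iii) $\Rightarrow$ (i). First I would extract a stable letter: pick $u\in\Per(\varphi)\setminus E(M_A)$, so $\oo{u}\neq 1$ and its first letter $a\in\wt{A}$ labels an edge $1\mapright{a}a$ of $\MT(u)$; since $u\in\Fix(\varphi^k)$ for some $k$, the Munn trees $\MT(u\varphi^n)$ are uniformly bounded, and applying Lemma~\ref{lem: action on edges} to $\varphi^n$ shows that $\MT((aa\inv)\varphi^n)$ embeds in $\MT(u\varphi^n)$ for every $n$, whence the orbit of $aa\inv$ is finite and $a\in\St_{\varphi}$. Now set $N=\Curl(\oo{\varphi})$ and $\psi=\varphi^N$; by Lemma~\ref{redcurl}, $\Curl(\oo{\psi})=1$, and by Lemma~\ref{stablepower}, $a\in\St_{\psi}$, so Theorem~\ref{theo: product of phi-tiles} gives $t:=\mathcal{K}_{\psi}(aa\inv)a\in\mathcal{T}_{\psi}\subseteq\Fix(\psi)$ with $\oo{t}=a$. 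For each $n\ge 1$ the element $u_n:=t^n(t^n)\inv$ lies in $\Fix(\psi)\cap E(M_A)$, and since $1,a\in T(t)$ we get $T(u_n)=T(t^n)=\bigcup_{i=0}^{n-1}a^iT(t)\supseteq\{1,a,\ldots,a^n\}$, so $||u_n||\ge n$.

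The last step turns these unbounded idempotents of $\Fix(\varphi^N)$ into elements of $\Fix(\varphi)$, and this is where the real difficulty lies: for a general $w\in\Fix(\varphi^N)$ the obvious candidate $w(w\varphi)\cdots(w\varphi^{N-1})$ need not be $\varphi$-invariant, because its image under $\varphi$ is $(w\varphi)\cdots(w\varphi^{N-1})w$, a cyclic rearrangement that does not equal the original unless the factors commute. This is precisely why I passed to idempotents. Since idempotents of $M_A$ commute, $v_n:=u_n(u_n\varphi)\cdots(u_n\varphi^{N-1})$ is an idempotent independent of the order of its factors, and using $u_n\varphi^N=u_n$ together with commutativity one gets $v_n\varphi=(u_n\varphi)\cdots(u_n\varphi^{N-1})u_n=v_n$, so $v_n\in\Fix(\varphi)$. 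Moreover $T(v_n)=\bigcup_{j=0}^{N-1}T(u_n\varphi^j)\supseteq T(u_n)$, hence $||v_n||\ge||u_n||\ge n$. Since $M_A$ contains only finitely many elements of any bounded norm, $\Fix(\varphi)$ is infinite, which is (i). Everything outside this final paragraph is routine bookkeeping with results already proved; the commuting-idempotents trick that circumvents the cyclic-rearrangement obstruction is the one genuinely new point.
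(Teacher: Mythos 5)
Your proof is correct, but it takes a longer and more constructive route than the paper's. The paper splits the nontrivial direction differently: it proves (iii) $\Rightarrow$ (ii) by the one-line observation that a non-idempotent $u\in\Per(\varphi)$ generates an infinite submonoid $u^*$ (the powers are distinguished in $F_A$ by $\oo{u}\neq 1$), and proves (ii) $\Rightarrow$ (i) by a short greedy construction: given any $u\in\Per(\varphi)$ of large norm, the element $e=\mathcal{K}_{\varphi}(uu\inv)$ lies in $\Fix(\varphi)$ and has $||e||\geq||u||$, so one can manufacture fixed points of unbounded norm directly from periodic points of unbounded norm. Your (iii) $\Rightarrow$ (i) instead extracts a stable letter $a$ from a non-idempotent periodic point, passes to $\psi=\varphi^N$ and the tile $t=\mathcal{K}_{\psi}(aa\inv)a$, builds the idempotents $u_n=t^n(t^n)\inv\in\Fix(\varphi^N)$ of unbounded norm, and finally forms $v_n=u_n(u_n\varphi)\cdots(u_n\varphi^{N-1})\in\Fix(\varphi)$. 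The \emph{commuting-idempotents trick} you single out at the end is precisely what the operator $\mathcal{K}_{\varphi}$ packages, and your $v_n$ is in effect $\mathcal{K}_{\varphi}(u_n)$; the paper simply applies that operator at the outset to $uu\inv$ for an arbitrary periodic $u$ of large norm, bypassing stable letters, tiles, and Theorem~\ref{theo: product of phi-tiles} in that leg entirely. Your (ii) $\Rightarrow$ (iii) by contraposition through equation (\ref{perio1}) is also heavier than needed: the paper just quotes Theorem~\ref{perio} for finite generation of $\Per(\varphi)$ and the fact that a finitely generated submonoid of $E(M_A)$ is finite. Nothing in your argument is wrong, and the extraction of a stable letter from a general periodic (not just fixed) point is a mild and correct generalization of Proposition~\ref{prop: edge are stable}; but the shortest path here is to observe first that (iii) gives periodic points of unbounded norm for free, and only then invoke $\mathcal{K}_{\varphi}$.
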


\begin{proof}
(i) $\Rightarrow$ (ii). Trivial.

(ii) $\Rightarrow$ (i). We build an infinite sequence $(e_n)_n$ of (distinct)
elements of $\Fix(\varphi)$ as follows. Let $n \geq 1$ and assume that
$e_1, \ldots, e_{n-1}$ are already defined. Since $\Per(\varphi)$ is
infinite, there exists some $u \in \Per(\varphi)$ such that
$||u|| > ||e_i||$ for $i = 1,\ldots, n-1$. We have $uu\inv \in
\Per(\varphi)$, say $(uu\inv)\varphi^m = uu\inv$. Clearly, we
can take
$e_n = \mathcal{K}_{\varphi}(uu\inv)=\prod_{i=0}^{m-1} (uu\inv)\varphi^{i}
\in \Fix(\varphi)$. Since $||e_n|| \geq ||u|| > ||e_i||$, then $e_n
\neq e_i$ for $i = 1,\ldots, n-1$. Thus we build an infinite
sequence $(e_n)_n$ and so $\Fix(\varphi)$ is infinite.

(ii) $\Rightarrow$ (iii). By Theorem \ref{perio}, $\Per(\varphi)$ is
finitely generated, and every finitely generated submonoid of $E(M_A)$
is finite.

(iii) $\Rightarrow$ (ii). If $u \in \Per(\varphi) \setminus E(M_A)$, then
$u^*$ is an infinite submonoid of $\Per(\varphi)$.
\end{proof}

Given $\varphi \in \End(M_A)$ with $\oo{\varphi}$ injective, we may
write $\wh{\varphi} = \wh{\oo{\varphi}}$. Recall that we need
injectivity to extend $\oo{\varphi}$ to the boundary of $F_A$.
If there are no nontrivial fixed points in
$\wh{\varphi}$, a good deal of the hierarchy collapses:

\begin{theorem}
\label{noncf}
Let $\varphi \in \End(M_A)$ be such that $\oo{\varphi}$ is injective
and $\Fix(\wh{\varphi}) = 1$. Then the following conditions are
equivalent:
\begin{itemize}
\item[(i)]
$\Fix(\varphi)$ is context-free;
\item[(ii)]
$\Fix(\varphi)$ is rational;
\item[(iii)]
$\Fix(\varphi)$ is finitely generated;
\item[(iv)]
$\Fix(\varphi)$ is finite;
\item[(v)]
$\Per(\varphi)$ is finite;
\item[(vi)]
$\Per(\varphi) \subseteq E(M_A)$.
\end{itemize}
\end{theorem}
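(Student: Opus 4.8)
The plan is to prove the cycle $(iv)\Rightarrow(iii)\Rightarrow(ii)\Rightarrow(i)\Rightarrow(iv)$ together with the block of equivalences $(iv)\Leftrightarrow(v)\Leftrightarrow(vi)$. The chain $(iv)\Rightarrow(iii)\Rightarrow(ii)\Rightarrow(i)$ is immediate: a finite submonoid is finitely generated; a finitely generated submonoid of $M_A$ is a rational subset (apply $\pi$ to the submonoid of $\wt{A}^*$ generated by finitely many words representing a generating set); and every rational subset is a fortiori context-free. The equivalences $(iv)\Leftrightarrow(v)\Leftrightarrow(vi)$ are exactly the contrapositives of the three equivalent statements of Corollary \ref{ppff}. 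Hence everything comes down to proving $(i)\Rightarrow(iv)$: under the standing hypotheses, a context-free $\Fix(\varphi)$ is finite. I would argue by contradiction, assuming $\Fix(\varphi)$ is infinite and context-free.

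First I would extract the structure forced by the hypotheses. Since $\Fix(\oo{\varphi})\subseteq\Fix(\wh{\varphi})=\{1\}$, Lemma \ref{permv} gives $\oo{u}=1$ for every $u\in\Fix(\varphi)$, so $\Fix(\varphi)\subseteq E(M_A)$; its elements are the finite subtrees of $C_A$ containing $1$ that $\varphi$ fixes. Moreover Lemma \ref{permv} gives $(T(u))\oo{\varphi}\subseteq T(u)$ for $u\in\Fix(\varphi)$, and since $\oo{\varphi}$ is injective and $T(u)$ is finite, $\oo{\varphi}$ restricts to a permutation of $T(u)$; in particular every vertex of $\MT(u)$ lies in $\Per(\oo{\varphi})=\Fix(\oo{\varphi}^{\Curl(\oo{\varphi})})$. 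As $\Fix(\varphi)$ is infinite and consists of finite trees, the norms $||u||$ are unbounded, so $\mathcal{U}:=\bigcup_{u\in\Fix(\varphi)}T(u)$ is an infinite prefix-closed subset of $R_A$, and K\"{o}nig's Lemma produces an infinite reduced word inside it, all of whose prefixes lie in $\Per(\oo{\varphi})$. This step uses only infiniteness and merely locates a fixed point of some power $\wh{\varphi}^{\,N}$ on the boundary; upgrading it to a fixed point of $\wh{\varphi}$ itself, hence a contradiction with $\Fix(\wh{\varphi})=1$, is where the context-free hypothesis must enter.

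So now I would invoke $\Fix(\varphi)=L\pi$ for a context-free $L\subseteq\wt{A}^*$. Since $\Fix(\varphi)$ is infinite, $L$ has arbitrarily long words; applying the pumping lemma --- in the sharper form of Ogden's Lemma, marking in a traversal word of some $u\in\Fix(\varphi)$ of large norm the first-passage positions along a geodesic of $\MT(u)$ from $1$ to a deepest vertex --- one aims to produce, for arbitrarily large $i$, words $\alpha\beta^{i}\gamma\delta^{i}\epsilon\in L$ with $e_i:=(\alpha\beta^{i}\gamma\delta^{i}\epsilon)\pi\in\Fix(\varphi)$ and $\oo{\beta}\neq 1$. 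Since $\oo{\alpha\beta^{i}\gamma\delta^{i}\epsilon}=1$ for all $i$ (each $e_i$ is idempotent), one gets $\oo{\delta}=\oo{\gamma}\inv\oo{\beta}\inv\oo{\gamma}$, so pumping $\delta$ only retraces the $\beta$-branch; hence $\MT(e_i)$ is a geodesic branch whose length grows linearly with $i$, in the fixed direction at infinity determined by $\oo{\beta}$, carrying trees of bounded size along it. Each $\MT(e_i)$ has its vertex set permuted by $\oo{\varphi}$ (by the previous paragraph); since $\oo{\varphi}$ is injective, the images of the vertices of this branch form, for $i$ large, an unbounded set within a bounded distance of it, and in a tree this forces those images to share the branch's direction at infinity. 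Letting $i\to\infty$ and passing to $\wh{F_A}$, one then obtains a nontrivial infinite reduced word fixed by $\wh{\varphi}$, contradicting $\Fix(\wh{\varphi})=1$; therefore $\Fix(\varphi)$ is not context-free, and $(i)\Rightarrow(iv)$ follows.

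The main obstacle is the last paragraph. First, one must secure a non-degenerate pump, $\oo{\beta}\neq 1$ (a ``trivial'' pump with $\oo{\beta}=\oo{\delta}=1$ leaves the Munn tree essentially unchanged and is useless): this is precisely where the infiniteness of $\Fix(\varphi)$, i.e.\ the existence of fixed trees of unbounded depth, together with Ogden's position control, is genuinely needed, and it requires a careful choice of traversal word and of the marked positions. Second, one must handle the cancellation bookkeeping at the seams between $\oo{\alpha},\oo{\beta},\oo{\gamma},\oo{\delta},\oo{\epsilon}$ and the cyclic reduction of $\oo{\beta}$ in order to pin down the asymptotic shape of $\MT(e_i)$. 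Finally --- the conceptual heart, and the reason the boundary hypothesis bites here where it would not over the free group (cf.\ Proposition \ref{perfix}) --- one must exploit that the $e_i$ are \emph{fixed} by $\varphi$ (period $1$), not merely periodic: this is exactly what promotes ``the limit word is $\wh{\varphi}$-periodic'', which infiniteness alone would give, to ``the limit word is $\wh{\varphi}$-fixed''.
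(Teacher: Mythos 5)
Your reduction of the theorem to $(i)\Rightarrow(iv)$ is correct: $(iv)\Rightarrow(iii)\Rightarrow(ii)\Rightarrow(i)$ is trivial, and $(iv)\Leftrightarrow(v)\Leftrightarrow(vi)$ is precisely the contrapositive of Corollary~\ref{ppff}, as you say. Your setup for $(i)\Rightarrow(iv)$ also matches the paper in spirit: use that $\Fix(\varphi)\subseteq E(M_A)$, pump a context-free language $L$ with $L\pi=\Fix(\varphi)$, and produce a nontrivial boundary point fixed by $\wh{\varphi}$. The structural observations in your second paragraph (that $\oo\varphi$ permutes each finite set $T(u)$, and K\"onig's Lemma yields an infinite ray in $\bigcup_{u\in\Fix(\varphi)}T(u)$) are correct as far as they go.

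But the core step --- promoting \emph{some} boundary ray to a ray \emph{fixed by $\wh\varphi$} --- has a real gap, and you half-acknowledge it. The sentence ``since $\oo{\varphi}$ is injective, the images of the vertices of this branch form, for $i$ large, an unbounded set within a bounded distance of it, and in a tree this forces those images to share the branch's direction at infinity'' does not establish what is needed. Knowing that $\oo\varphi$ bijects the finite set $T(e_i)$ onto itself gives no control whatsoever on \emph{which} vertex a given deep vertex $q$ is sent to; a bijection of a long segment fixing the origin need not be anywhere near order-preserving, so there is no a priori reason that $q\oo\varphi$ shares a long prefix with $q$. What you need is a quantitative statement of the form $d(q,q\oo\varphi)\to 0$ as one moves deeper into the trees, and nothing in your sketch produces it. (One can try to repair this via a Lipschitz-plus-bijection argument using the bound $\|a\varphi\|\le M$, but you would have to actually prove monotonicity of the induced map on the spine, handle the decorations, and stabilize the direction across $i$; none of that is present.)

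The paper handles exactly this point by extracting a precise metric inequality directly from the fixed-point equation, not from injectivity. Pumping is applied \emph{downward}: take $w\in K$ of minimal length with $\|w\|\ge m$, factor $w=w_1w_2w_3w_4w_5$, set $u=w_1w_3w_5\in K$, and compare $u$ with $uw$. Minimality of $w$ forces $\|u\|<m$, so $T(uw)\setminus T(u)$ is non-empty and sits at depth $\ge m-p$ (with $|\oo{w_1}|\ge m-p$), landing $(u,uw)$ in the auxiliary set $\Lambda_{m-p}$. Then the equation $(v v^{-1})\varphi=vv^{-1}$ together with $T(v)=T(u)\cup\oo{qT(w)}$, $|q|\ge m$, $\|w\|\le p$, is used to derive $d(q,q\oo\varphi)<2^{pM-m}$, where $M=\max_a\|a\varphi\|$. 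This estimate is what makes the compactness argument on $\wh{F_A}$ produce a genuine fixed point of $\wh\varphi$, not merely a periodic one. That estimate, or an equivalent of it, is the missing ingredient in your third paragraph; without it, the injectivity of $\oo\varphi$ alone is not enough to conclude.
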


\begin{proof}
(i) $\Rightarrow$ (iv). Assume that $\Fix(\varphi) = K\pi$ for some $K \subseteq
\wt{A}^*$ context-free.
By the Pumping Lemma for context-free languages, there exists a
constant  $p \geq 1$ such that every $w \in K$ of length $> p$ admits a
factorization $w = w_1w_2w_3w_4w_5$ such that:
\begin{itemize}
\item
$|w_2w_3w_4| \leq p$;
\item
$w_2w_4 \neq 1$;
\item
$w_1w_2^nw_3w_4^nw_5 \in K$ for every $n \geq 0$.
\end{itemize}
For every $m \ge 0$, let
$$\begin{array}{ll}
\Lambda_m = \{&(u,v) \in \Fix(\varphi) \times \Fix(\varphi) \mid u \neq v \mbox{
  and } \exists q \in T(u)\\
&\exists w \in M_A: T(v) = T(u) \cup \oo{qT(w)},\;
|q| \geq m,\; ||w|| \leq p \}.
\end{array}$$
We show that
\begin{equation}
\label{noncf1}
\mbox{only finitely many }\Lambda_m \mbox{ are nonempty}.
\end{equation}
Indeed, suppose that (\ref{noncf1}) fails. This amounts to say that
$\Lambda_m \neq \emptyset$ for every $m \ge 0$. Fix $(u_m,v_m) \in
\Lambda_m$ and let $q_m,w_m$ be as in the definition of
$\Lambda_m$. Let
$$M = \max\{ ||a\varphi|| : a \in \wt{A} \}.$$
We claim that
\begin{equation}
\label{noncf2}
d(q_m,q_m\oo{\varphi}) < 2^{pM-m}.
\end{equation}
Indeed, we have $T(v_m) = T(u_m) \cup \oo{q_mT(w_m)}$. Hence
$$v_mv_m\inv = u_mu_m\inv q_mw_mw_m\inv q_m\inv$$ and so
\begin{equation}
\label{noncf3}
\begin{array}{lll}
v_mv_m\inv&=&(v_mv_m\inv)\varphi = (u_mu_m\inv q_mw_mw_m\inv
q_m\inv)\varphi\\
&=&u_mu_m\inv(q_mw_mw_m\inv
q_m\inv)\varphi.
\end{array}
\end{equation}
Since $q_mq_m\inv \geq u_mu_m\inv$, we have $(q_mq_m\inv)\varphi \geq
(u_mu_m\inv)\varphi = u_mu_m\inv$ and so $T(q_m\varphi) \subseteq
T(u_m)$. Hence (\ref{noncf3}) yields
$$T(v_m) = T(u_m) \cup
\oo{(q_m\varphi)T(w_m\varphi)} = T(u_m) \cup
\oo{(q_m\oo{\varphi})T(w_m\varphi)}.$$
Now it is easy to see that
$||w_m|| \leq p$ yields $||w_m\varphi|| \leq pM$. Let $a \in \wt{A}$
be such that $q_ma \in T(v_m) \setminus T(u_m)$. Then $q_ma =
\oo{(q_m\oo{\varphi})z}$ for some reduced word $z$ of length $\leq pM$
and so $q_m\oo{\varphi} = \oo{q_maz\inv}$. Since $|q_m| \geq m$ and
$z\inv$ can cancel at most $pM$ letters from the reduced word $q_ma$,
we get $r(q_m, q_m\oo{\varphi}) > m-pM$. Therefore (\ref{noncf2}) holds.

Now, since the completion $\wh{F_A}$ is compact, the sequence
$(q_m)_m$ must admit a convergent subsequence $(q_{i_m})_m$. Let
$\alpha = \lim_{m\to \infty} q_{i_m}$. We claim that $\alpha \in
\Fix(\wh{\varphi})$. Indeed, by continuity we have $\alpha\wh{\varphi}
= \lim_{m\to \infty} q_{i_m}\oo{\varphi}$. Let $\varepsilon >
0$. Since $\alpha = \lim_{m\to \infty} q_{i_m}$, there exists some $t
\geq 1$ such that
$$m \ge t \; \Rightarrow \; d(q_{i_m}, \alpha) < \varepsilon.$$
Moreover, we may assume that $2^{pM-t} < \varepsilon$. Thus, if $m \ge
t$, and since $d$ is an ultrametric, we get
$$\begin{array}{lll}
d(q_{i_m}\oo{\varphi}, \alpha)&\le&\max\{ d(q_{i_m}\oo{\varphi},
q_{i_m}), d(q_{i_m}, \alpha) \} < \max \{ 2^{pM-m}, \varepsilon \}\\
&\le&\max \{ 2^{pM-t}, \varepsilon \} < \varepsilon.
\end{array}$$
Hence $\alpha\wh{\varphi} = \alpha$ and so $\alpha \in
\Fix(\wh{\varphi})$. Since $|q_m| \geq m$ for every $m$, we have
$\alpha \neq 1$, a contradiction. Therefore (\ref{noncf1}) holds.

Suppose now that $\Fix(\varphi)$ is infinite. Since
$\Fix(\oo{\varphi}) = 1$, it follows from Lemma \ref{permv} that
$\Fix(\varphi) \subseteq E(M_A)$. Let $m > p$. Since $\Fix(\varphi)$
is infinite, there exists some $w \in K$ with $||w|| \ge
m$. We may assume that $w$ has minimal length among all such words.

Since $|w| \ge ||w|| \ge m > p$, there exists a
factorization $w = w_1w_2w_3w_4w_5$ such that:
\begin{itemize}
\item
$|w_2w_3w_4| \leq p$;
\item
$w_2w_4 \neq 1$;
\item
$w_1w_2^nw_3w_4^nw_5 \in K$ for every $n \geq 0$.
\end{itemize}
Let $u = w_1w_3w_5 \in K$. By minimality of $w$, we have $||u|| < m$,
hence $u \neq uw$ in $M_A$. We claim that
$$(u,uw) \in \Lambda_{m-p}.$$
First of all, we note that $\oo{u} = 1 = \oo{w}$ yields
$\oo{w_2w_3w_4} = \oo{w_3}$. There exists a path
$$1 \mapright{w_1} \oo{w_1} \mapright{w_3} \oo{w_1w_3} \mapright{w_5} 1$$
in $\MT(u)$. We have
$$T(u) = T(w_1) \cup \oo{w_1T(w_3)} \cup \oo{w_1w_3T(w_5)},$$
$$T(uw) = T(u) \cup T(w_1) \cup \oo{w_1T(w_2w_3w_4)} \cup
\oo{w_1w_2w_3w_4T(w_5)}.$$
Since $\oo{w_2w_3w_4} = \oo{w_3}$, we get
\begin{equation}
\label{noncf5}
T(uw) = T(u) \cup
\oo{w_1T(w_2w_3w_4)}.
\end{equation}
Note that $||w_2w_3w_4|| \leq |w_2w_3w_4| \leq p$. Suppose that
$|\oo{w_1}| < m-p$. Then the maximal length of a word in
$\oo{w_1T(w_2w_3w_4)}$ is $< m$, and in view of $||u|| < m$ and
(\ref{noncf5}) we get $||w|| < m$, a contradiction. Hence $|\oo{w_1}|
\geq m-p$. Together with (\ref{noncf5}), this yields $(u,uw) \in
\Lambda_{m-p}$ as claimed. Since $m$ is arbitrary large, this
contradicts (\ref{noncf1}). Therefore $\Fix(\varphi)$
is not context-free.

(iv) $\Rightarrow$ (iii) $\Rightarrow$ (ii) $\Rightarrow$ (i). Trivial.

(iv) $\Leftrightarrow$ (v) $\Leftrightarrow$ (vi). By Corollary \ref{ppff}.
\end{proof}

\begin{cor}
\label{grper}
Let $\varphi \in \End(M_A)$ be such that $\varphi|_{\wt{A}}$ is a
permutation without fixed points. Then $\Fix(\varphi)$ is not context-free.
\end{cor}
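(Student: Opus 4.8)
The plan is to derive the statement from Theorem \ref{noncf}, so the first task is to verify its two hypotheses for our $\varphi$. Since $\varphi|_{\wt{A}}$ is a permutation of the (necessarily nonempty) involutive alphabet $\wt{A}$ and every endomorphism of $M_A$ commutes with the involution, the induced map $\oo{\varphi}$ sends each generator to a generator or to the inverse of a generator, bijectively; hence $\oo{\varphi}$ is an automorphism of $F_A$, in particular injective, so $\wh{\varphi} = \wh{\oo{\varphi}}$ is defined. To check $\Fix(\wh{\varphi}) = 1$, I would first observe that $\oo{\varphi}$ acts on reduced words letter by letter: if $b_1 b_2 \cdots$ (finite or infinite, $b_i \in \wt{A}$) is reduced, then $b_{i+1} \neq b_i\inv$, and since $(b_i\varphi)\inv = (b_i\inv)\varphi$ and $\varphi|_{\wt{A}}$ is injective, we get $b_{i+1}\varphi \neq (b_i\varphi)\inv$; thus $(b_1\varphi)(b_2\varphi)\cdots$ is again reduced. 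Passing to the completion by continuity, $\wh{\varphi}$ maps $b_1 b_2 \cdots$ to $(b_1\varphi)(b_2\varphi)\cdots$, and since $\varphi|_{\wt{A}}$ has no fixed point this differs from $b_1 b_2 \cdots$ as soon as the latter is nonempty. Hence $\Fix(\wh{\varphi}) = 1$.

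With the hypotheses in place, it remains to show that $\Fix(\varphi)$ lies on the ``infinite'' side of the dichotomy of Theorem \ref{noncf}, i.e.\ that condition (vi) fails: $\Per(\varphi) \not\subseteq E(M_A)$. This is immediate: $\varphi|_{\wt{A}}$ is a permutation of a finite set, so it has some finite order $d \geq 1$; then $\varphi^d$ fixes every letter of $\wt{A}$, and since an endomorphism of $M_A$ is determined by its action on $\wt{A}$ we get $\varphi^d = \mathrm{id}_{M_A}$, so in fact $\Per(\varphi) = M_A$. Choosing any $a \in A$ (possible since $\wt{A} \neq \emptyset$), the element $a\pi \in M_A = \Per(\varphi)$ is not idempotent, because $(a\pi)\sigma' = \oo{a} = a \neq 1$ whereas every idempotent of $M_A$ maps to $1$ under $\sigma'$. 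Thus $\Per(\varphi) \not\subseteq E(M_A)$.

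Combining the two steps, Theorem \ref{noncf} applies and its condition (vi) fails, hence condition (i) fails as well: $\Fix(\varphi)$ is not context-free.

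The verifications above are all short; the one point that needs a little care is the claim that $\wh{\varphi}$ acts on finite and infinite reduced words simply by applying $\varphi|_{\wt{A}}$ letterwise, which rests on the observation that a fixed-point-free permutation of $\wt{A}$ commuting with the involution cannot turn a reduced pair into a cancellable one. This is exactly what forces $\Fix(\wh{\varphi}) = 1$ and is therefore the crux of checking that the hypotheses of Theorem \ref{noncf} are met.
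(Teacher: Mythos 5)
Your proposal is correct and follows the same route as the paper: verify that $\oo{\varphi}$ is an automorphism with $\Fix(\wh{\varphi})=1$ and that $\Per(\varphi)=M_A$, then invoke Theorem \ref{noncf}. The paper merely asserts these facts as ``clear,'' whereas you supply the (sound) letterwise argument for $\Fix(\wh{\varphi})=1$ and the finite-order argument for $\Per(\varphi)=M_A$.
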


\begin{proof}
Clearly, $\oo{\varphi}$ is an automorphism and $\Fix(\wh{\varphi}) =
1$. Moreover, $\Per(\varphi) = M_A$. By Theorem \ref{noncf},
$\Fix(\varphi)$ is not context-free.
\end{proof}

The above corollary provides an infinite class of examples, for
arbitrary curl $> 1$, where $\Fix(\varphi)$ is not context-free.

\section{Radicals}\label{sec:radical}

We introduce now the concept of \emph{radical}, inspired by analogous
definitions in other contexts. Radicals occupy an
intermediate position between the submonoids of fixed points and
periodic points.

Given $\varphi\in\End(M_A)$ and $n \geq 1$, we define
$$
\Rad_{n}(\varphi) = \{ u \in \Fix(\varphi^n) \mid \oo{u} \in
\Fix(\oo{\varphi}) \}.$$
The following result summarizes some of the basic properties of
radicals:

\begin{lemma}
\label{radic}
Let $\varphi\in\End(M_A)$ and $m,n \geq 1$. Then:
\begin{itemize}
\item[(i)] $\Rad_{n}(\varphi)$ is an inverse submonoid of $M_A$;
\item[(ii)] $\Fix(\varphi) = \Rad_{1}(\varphi) \le \Rad_{n}(\varphi)
  \le \Per(\varphi)$;
\item[(iii)] if $m|n$, then $\Rad_{m}(\varphi) \le \Rad_{n}(\varphi)$;
\item[(iv)] $\Fix(\varphi) = \{ \mathcal{K}_{\varphi}(uu\inv)u \mid u
  \in \Rad_{n}(\varphi) \}$.
\end{itemize}
\end{lemma}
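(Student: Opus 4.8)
The plan is to verify each of the four parts in turn, exploiting the characterization of fixed points of $\varphi$ provided by Lemma \ref{permv} and the elementary behaviour of the norm and the natural partial order. For part (i), I would note that $\Rad_n(\varphi) = \{u \in M_A \mid u\varphi^n = u \text{ and } \oo{u}\,\oo{\varphi} = \oo{u}\}$; the first condition defines the inverse submonoid $\Fix(\varphi^n)$, while the second defines the inverse submonoid $(\sigma')\inv(\Fix(\oo{\varphi}))$ of $M_A$ (the preimage of an inverse submonoid of $F_A$ under the matched homomorphism $\sigma'$, using $\oo{u\inv} = \oo{u}\inv$). An intersection of inverse submonoids is again an inverse submonoid, so (i) follows. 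For part (iii), if $m \mid n$ then $\Fix(\varphi^m) \subseteq \Fix(\varphi^n)$ (as already recorded for periodic points), and the condition $\oo{u} \in \Fix(\oo{\varphi})$ does not depend on $m$ or $n$; hence $\Rad_m(\varphi) \le \Rad_n(\varphi)$.

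For part (ii), the equality $\Fix(\varphi) = \Rad_1(\varphi)$ is immediate from Lemma \ref{permv}: if $u \in \Fix(\varphi)$ then $\oo{u} \in \Fix(\oo{\varphi})$ by that lemma, so $u \in \Rad_1(\varphi)$; conversely $u \in \Rad_1(\varphi)$ means $u \in \Fix(\varphi)$ outright. The inclusions $\Rad_1(\varphi) \le \Rad_n(\varphi) \le \Per(\varphi)$ are then consequences: the first follows from (iii) with $m = 1$, and the second is clear since $\Rad_n(\varphi) \subseteq \Fix(\varphi^n) \subseteq \Per(\varphi)$.

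Part (iv) is the substantive one and will be the main point requiring care. The inclusion $\supseteq$ uses Lemma \ref{permv} together with an argument in the spirit of Theorem \ref{theo: product of phi-tiles}: given $u \in \Rad_n(\varphi)$, write $e = \mathcal{K}_\varphi(uu\inv) = \prod_{i=0}^{p-1}(uu\inv)\varphi^i$, where $p$ is the period of the idempotent $uu\inv$ (which is $\varphi$-stable since $u \in \Fix(\varphi^n)$ forces a finite orbit for $uu\inv$, as $(uu\inv)\varphi^{nk} = uu\inv$ for all $k$; more precisely $p \mid n$). One checks $e\varphi = e$ directly by reindexing the product, using that $(uu\inv)\varphi^p = uu\inv$ and that the idempotents commute, so $e \in \Fix(\varphi)$; and $\oo{e} = 1 \in \Fix(\oo{\varphi})$; and $e u \varphi = e u$ using $\oo{u} \in \Fix(\oo{\varphi})$ and the computation $(eu)\varphi = e\,(u\varphi)$ with $u\varphi \ge$ (something $\ge uu\inv\oo{u}$) — here one argues as in Lemma \ref{permv}(ii)$\Rightarrow$(i) that $u\varphi^n = u$ and $e \ge uu\inv$ give $(eu)\varphi = eu$. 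For the reverse inclusion $\subseteq$: given $u \in \Fix(\varphi)$, take $u$ itself in the role of the element of $\Rad_n(\varphi)$ — since $u \in \Fix(\varphi) = \Rad_1(\varphi) \le \Rad_n(\varphi)$ by (ii), and $uu\inv \in \Fix(\varphi)$ has period $p = 1$, so $\mathcal{K}_\varphi(uu\inv) = uu\inv$ and $\mathcal{K}_\varphi(uu\inv)u = uu\inv u = u$.

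The main obstacle I anticipate is the $\supseteq$ direction of (iv): showing that $\mathcal{K}_\varphi(uu\inv)u$ is genuinely fixed by $\varphi$ (not merely by $\varphi^n$) when we only know $u \in \Fix(\varphi^n)$ and $\oo{u} \in \Fix(\oo{\varphi})$. The key identity is $(\mathcal{K}_\varphi(uu\inv))\varphi = \mathcal{K}_\varphi(uu\inv)$, obtained by observing that applying $\varphi$ cyclically permutes the factors $(uu\inv)\varphi^i$ of the product and that these idempotents commute; combined with $\mathcal{K}_\varphi(uu\inv) \ge uu\inv \ge uu\inv\oo{u}$ and Lemma \ref{permv}, this yields the claim. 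One must be slightly careful that the period $p$ of $uu\inv$ divides $n$ so that $\mathcal{K}_\varphi(uu\inv) = \prod_{i=0}^{n-1}(uu\inv)\varphi^i$ up to the commuting-idempotent rearrangement, which makes the $\varphi$-invariance transparent.
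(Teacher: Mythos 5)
Your argument is correct and follows essentially the same route as the paper: parts (i)--(iii) are immediate, and for part (iv) one checks that $e = \mathcal{K}_\varphi(uu\inv) \in \Fix(\varphi)$ (via the cyclic reindexing of the commuting factors, using $p \mid n$), notes $vv\inv = e$ and $\oo{v}=\oo{u}\in\Fix(\oo{\varphi})$ for $v = eu$, and then applies Lemma~\ref{permv}(ii)$\Rightarrow$(i), with the converse inclusion following from $\mathcal{K}_\varphi(uu\inv)=uu\inv$ when $u\in\Fix(\varphi)$. One small slip: you write $e \ge uu\inv$, but in fact $e = \mathcal{K}_\varphi(uu\inv) \le uu\inv$ (it is a product of idempotents including $uu\inv$); this does not affect the conclusion since the real work is done by invoking Lemma~\ref{permv}, not by re-deriving it.
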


\begin{proof}
(i) -- (iii) Immediate.

(iv) Let $u \in \Rad_{n}(\varphi)$ and $v =
\mathcal{K}_{\varphi}(uu\inv)u$. Then $(uu\inv)\varphi^n = uu\inv$ and
it follows easily that $\mathcal{K}_{\varphi}(uu\inv) \in
\Fix(\varphi)$. Hence $vv\inv = \mathcal{K}_{\varphi}(uu\inv) \in
\Fix(\varphi)$. On the other hand, $\oo{v} = \oo{u} \in
\Fix(\oo{\varphi})$, and Lemma \ref{permv} yields $u \in
\Fix(\varphi)$.

Conversely, let $u \in \Fix(\varphi) \subseteq
\Rad_{n}(\varphi)$. Then $u = \mathcal{K}_{\varphi}(uu\inv)u$ and we
are done.
\end{proof}

Radicals may behave better than submonoids of fixed points:

\begin{theorem}
\label{cfrad}
Let $\varphi\in\End(M_A)$ and $N = \Curl(\oo{\varphi})$. Then
$\Rad_{N}(\varphi)$ is context-free.
\end{theorem}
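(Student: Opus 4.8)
The plan is to describe $\Rad_N(\varphi)$ as the image under $\pi$ of a context-free language, built from the constraint that an element $u$ of $\Rad_N(\varphi)$ is determined (as an element of $M_A$) by the pair $(T(u),\oo{u})$, where $\oo{u}\in\Fix(\oo{\varphi})$ and $T(u)\oo{\varphi}\subseteq T(u)$ (Lemma~\ref{permv}, which applies since $u\in\Fix(\varphi^N)$ and $\oo u\in\Fix(\oo\varphi)$). Since $\Fix(\oo{\varphi})$ is finitely generated, write $\oo{u}=g_1\cdots g_k$ as a product of generators and their inverses of $\Fix(\oo{\varphi})$; this factor is handled by a regular language over a finite set of words mapping into $\Fix(\oo{\varphi})$. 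The idempotent part $uu\inv$ is the delicate one: by Theorem~\ref{theo: product of phi-tiles} applied to $\varphi^N$ (legitimate since $\Curl(\oo{\varphi}^N)=\Curl(\oo{\varphi^N})=1$ by Lemma~\ref{redcurl}), $\Fix(\varphi^N)=\mathcal{T}_{\varphi^N}^{*}$, so every $v\in\Fix(\varphi^N)$ is a product of $\varphi^N$-tiles; in particular $uu\inv\in\Fix(\varphi^N)$ is such a product, and moreover each tile $\mathcal{K}_{\varphi^N}(aa\inv)a$ with $a\in\St_{\varphi^N}=\St_{\varphi}$ has a fixed bounded Munn tree, drawn from a finite set.

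With this in hand I would build a context-free grammar (equivalently a pushdown automaton) as follows. First produce a word $z\in\wt{A}^{*}$ with $\oo{z}\in\Fix(\oo{\varphi})$ using the regular part above; this contributes $\oo{u}$. Then, at each vertex $p$ visited along the spelling of $\oo{u}$ — and this is where the stack is needed, to record the reduced prefix $p$ so as to return to it — nondeterministically insert finitely many $\varphi^N$-tiles (from $\mathcal{T}_{\varphi^N}$, a finite set by the argument in the proof of Theorem~\ref{perio}) read as loops $p\to p$, i.e. spell $t t\inv$ for $t$ a tile or, at the final vertex $\oo u$, possibly a single $t$; the stack pushes letters of $\wt A$ as we move along the path and pops them to come back, exactly as in the standard pushdown recognition of Dyck-like behaviour. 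The key point is that $T(u)$ is the union of $T(\oo u)$ (the geodesic) with translates $\oo{pT(w)}$ of bounded trees $T(w)$ anchored at reduced prefixes $p$ reachable along already-constructed portions of $\MT(u)$; iterating this finitely-branching but unbounded-depth process is precisely a context-free (pushdown) computation, and the $\rho$-class of the resulting word is the desired element of $\Rad_N(\varphi)$.

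I would then check the two inclusions. For soundness, every word the grammar produces maps under $\pi$ into $\Fix(\varphi^N)$ (it is a product of tiles and their inverses, hence in $\mathcal{T}_{\varphi^N}^{*}\cdot(\mathcal{T}_{\varphi^N}^{*})\inv\subseteq\Fix(\varphi^N)$ by Theorem~\ref{theo: product of phi-tiles} applied to $\varphi^N$) and has reduced image in $\Fix(\oo{\varphi})$ by construction; hence it lies in $\Rad_N(\varphi)$. For completeness, given $u\in\Rad_N(\varphi)$, decompose $uu\inv$ into tiles via Theorem~\ref{theo: product of phi-tiles} and decompose $\oo u$ into $\Fix(\oo\varphi)$-generators; reading $\MT(u)$ as the geodesic $\MT(\oo u)$ decorated by the tile-trees hanging at the appropriate reduced prefixes, one reconstructs a word in the language whose $\pi$-image is $u$ (using $u=uu\inv\,\oo u$ and $T(uu\inv)=T(u)$). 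The main obstacle I anticipate is bookkeeping the anchoring: a tile inserted at prefix $p$ may itself create new vertices at which further tiles must be hung, so one must argue that this recursion is captured by a single pushdown stack (the stack content being the current reduced prefix, with bounded-size detours), and that the finitely many tile shapes together with the regular description of $\Fix(\oo\varphi)$ keep the grammar finite; verifying that reading back and forth along stored prefixes correctly realizes the set-union description of $T(u)$ without spurious identifications (using the tree structure of $C_A$) is the step requiring genuine care.
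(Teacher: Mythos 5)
Your proposal has the right raw ingredients (Lemma~\ref{redcurl}, Theorem~\ref{theo: product of phi-tiles} applied to $\varphi^N$, and the finite generation of $\Fix(\oo{\varphi})$), but the PDA construction you sketch is both incomplete — you acknowledge yourself that the ``bookkeeping the anchoring'' step is unresolved — and, more importantly, it inverts which constraint actually needs the stack. The tile part is in fact the \emph{regular} part: since $\Fix(\varphi^N)=\mathcal{T}_{\varphi^N}^*$ with $\mathcal{T}_{\varphi^N}$ finite, there is a rational language $L=B^*\subseteq\wt{A}^*$ (with $B$ a finite set of words representing the tiles) satisfying $L\pi=\Fix(\varphi^N)$; no pushdown is needed to generate products of tiles, and in particular the recursion you worry about (``a tile inserted at prefix $p$ may itself create new vertices at which further tiles must be hung'') simply does not arise once you take the product-of-tiles normal form — the Munn tree of $t_1\cdots t_m$ is exactly the union of the translated tile trees along the geodesic, with no nesting. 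Conversely, the constraint you treat as the ``regular part'' — that $\oo{u}\in H:=\Fix(\oo{\varphi})$ — is precisely where the stack is genuinely needed: the preimage $H\sigma\inv\subseteq\wt{A}^*$ of a finitely generated subgroup is context-free but not in general rational (this is Sakarovitch's theorem, cited as \cite{Sak} in the paper), and replacing it by a fixed regular language of products of generator-words does not correctly capture the condition $v\sigma\in H$ on an \emph{arbitrary} word $v\in L$ representing $u$.

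Once these roles are sorted out, the theorem follows without building any automaton by hand: the paper simply observes that $\Rad_N(\varphi)=(L\cap H\sigma\inv)\pi$, an identity you can verify directly from the definitions (if $u=v\pi$ with $v\in L$, then $\oo{u}=v\sigma$, so $\oo{u}\in H$ iff $v\in H\sigma\inv$), and then the intersection of the rational language $L$ with the context-free language $H\sigma\inv$ is context-free. I'd encourage you to redo the proof along these lines: you will see that the two finiteness facts you already identified slot directly into this intersection argument, whereas your pushdown construction — even if it could be completed — is doing a great deal of unnecessary work and, as written, misattributes the source of the context-freeness.
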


\begin{proof}
By Lemma \ref{redcurl}, we have $\Curl(\oo{\varphi}^{N}) = 1$, hence
$\Fix(\varphi^N)$ is finitely generated by Theorem \ref{theo: product
  of phi-tiles}. Hence there exists a rational language $L \subseteq
\wt{A}^*$ such that $L\pi = \Fix(\varphi^N)$.

On the other hand, since $H = \Fix(\oo{\varphi})$ is finitely generated by
\cite{GT2}, its pre-image $H\sigma\inv$ is context-free by \cite{Sak}
(see also \cite[Ex. III.2.5]{Ber}). We claim that
$$\Rad_{N}(\varphi) = (L \cap H\sigma\inv)\pi.$$
Indeed, if $u \in \Rad_{N}(\varphi)$, then $u = v\pi$ for some $v \in
L$. Since $v\sigma = v\pi\sigma' = u\sigma' = \oo{u} \in H$, we get $v
\in L \cap H\sigma\inv$ and so $u \in (L \cap H\sigma\inv)\pi$. The
opposite inclusion is similar.

Since $L \cap H\sigma\inv$ is the intersection of a context-free
language with a rational language, it is context-free and so
$\Rad_{N}(\varphi)$ is context-free.
\end{proof}

\begin{theorem}
\label{nonrat}
Let $\varphi \in \End(M_A)$ be such that $\Fix(\oo{\varphi}) =
1$. Let $n \geq 1$. Then the following conditions are equivalent:
\begin{itemize}
\item[(i)]
$\Rad_{n}(\varphi)$ is rational;
\item[(ii)]
$\Rad_{n}(\varphi)$ is finite.
\end{itemize}
\end{theorem}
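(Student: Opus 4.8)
The statement (ii) $\Rightarrow$ (i) is trivial, since every finite subset of $M_A$ is rational. So the plan is to establish (i) $\Rightarrow$ (ii). Suppose $\Rad_{n}(\varphi) = L\pi$ for some rational (i.e.\ regular) language $L \subseteq \wt{A}^*$. The first step is to exploit the hypothesis $\Fix(\oo{\varphi}) = 1$: for any $u \in \Rad_{n}(\varphi)$ we have $\oo{u} \in \Fix(\oo{\varphi}) = 1$, hence $u = uu\inv \in E(M_A)$, so $\Rad_{n}(\varphi) \subseteq E(M_A)$. Consequently every $v \in L$ satisfies $v\pi \in E(M_A)$, that is $\oo{v} = v\pi\sigma' = 1$; writing $\ker\sigma = \{ v \in \wt{A}^* \mid \oo{v} = 1 \}$, we thus have $L \subseteq \ker\sigma$. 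Hence it suffices to prove the purely combinatorial fact: \emph{if $L \subseteq \ker\sigma$ is regular, then $L\pi$ is finite.}

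To prove this, fix a finite automaton recognizing $L$, with set of states $Q$. I claim $||v\pi|| \leq |Q|$ for every $v \in L$; since for each bound $B$ there are only finitely many idempotents of $M_A$ of norm $\leq B$ (balls in $C_A$ being finite), this forces $L\pi$, and hence $\Rad_{n}(\varphi)$, to be finite. So assume towards a contradiction that some $v \in L$ has $||v\pi|| = K > |Q|$. Since $v \in \ker\sigma$, the path labelled by $v$ in $C_A$ starts and ends at $1$, and it reaches some vertex $g$ with $|g| = K$; write $g = c_1 \cdots c_K$ in reduced form and set $g_\ell = c_1 \cdots c_\ell$. Fix an accepting run of $v$, let $T$ be the first instant at which the path attains height $K$, and for $\ell = 1, \ldots, K$ let $s_\ell$ be the last time $\le T$ at which the path reads $c_\ell$ while moving from $g_{\ell - 1}$ to $g_\ell$ (such an instant exists because, to be at $g = g_K$ at time $T$, the path must have crossed the edge $g_{\ell - 1} \mapright{c_\ell} g_\ell$ in this direction at least once). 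Using that $C_A$ is a tree --- so that after $s_\ell$ the path cannot descend below $g_\ell$ before reaching $g$ --- one checks that $s_1 < s_2 < \cdots < s_K$ and that the path sits at $g_\ell$ at time $s_\ell$. As $K > |Q|$, the run repeats a state among the instants $s_1, \ldots, s_K$, say at $s_a$ and $s_b$ with $a < b$. Writing $v = v_1v_2v_3$ where $v_2$ is the factor read between $s_a$ and $s_b$, the factor $v_2$ is a loop of the run, so $v_1v_2^iv_3 \in L \subseteq \ker\sigma$ for all $i \geq 0$, which forces $\oo{v_2} = 1$. But the path labelled $v_2$ runs from $g_a$ to $g_b$, so $\oo{v_2} = g_a\inv g_b = c_{a+1}\cdots c_b \neq 1$, a contradiction.

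The delicate point is the construction and ordering of the witnessing instants $s_\ell$: one has to argue, using the tree structure of $C_A$, that once the path performs its last upward crossing of the level-$\ell$ edge before first reaching height $K$, it remains on the far side of that edge until it reaches $g$, so that the upward crossings happen in order of increasing level. The rest is a routine pigeonhole/pumping computation. Conceptually, the proof isolates the role of the hypothesis $\Fix(\oo{\varphi}) = 1$: it is precisely what places $\Rad_{n}(\varphi)$ inside the semilattice $E(M_A)$, and rational subsets of $M_A$ lying inside $E(M_A)$ are necessarily finite. (One could also phrase the last fact by remarking that rational subsets of any commutative idempotent monoid are finite, but the direct norm bound above avoids having to compare rationality in $E(M_A)$ with rationality in $M_A$.)
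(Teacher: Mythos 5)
Your proof is correct and follows the same overall strategy as the paper's: use $\Fix(\oo{\varphi})=1$ to deduce $\Rad_n(\varphi)\subseteq E(M_A)$, hence $\oo{L}=1$ for any rational $L$ with $L\pi=\Rad_n(\varphi)$, and then bound $\|v\pi\|$ by the number of states of a recognizing automaton, which forces $L\pi$ to be finite. The two arguments diverge only in how that norm bound is extracted. The paper first observes that, in a \emph{trim} automaton with $\oo{L}=1$, every cycle label $z$ satisfies $\oo{z}=1$ (pump $z$ and compare $i=0,1$ in $F_A$); then, given $u\in L$ with $u\pi=e$ and a prefix $v$ of $u$ reaching a vertex of maximal norm, it removes all cycles from the run on $v$ to obtain $v'$ with $\oo{v'}=\oo{v}$ and $|v'|<|Q|$, whence $\|e\|=|\oo{v}|\le|v'|<|Q|$. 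You instead locate, along the path to a far vertex $g$, the last upward crossing $s_\ell$ of each of the $K$ edges on the geodesic $1\to g$, argue via the tree structure of $C_A$ that $s_1<\dots<s_K$, pigeonhole on automaton states among these instants, and pump the resulting loop to contradict $\oo{L}=1$. Both are sound; the paper's cycle-removal reaches the same bound while sidestepping the tree bookkeeping that you correctly flag as the delicate point, and its auxiliary observation ($\oo{z}=1$ for cycle labels) packages the pumping once and for all rather than per witness.
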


\begin{proof}
(i) $\Rightarrow$ (ii). Assume that $\Rad_{n}(\varphi) = L\pi$ for
some $L \subseteq \wt{A}^*$ rational. Then $L = L(\mathcal{A})$ for some finite
deterministic trim automaton $\mathcal{A} = (Q,q_0,T,E)$. Since
$\Fix(\oo{\varphi}) = 1$, we have $\oo{\Rad_{n}(\varphi)} = 1$
and so $\oo{L} = 1$.
Since $\mathcal{A}$ is trim, it follows that $\oo{z} = 1$
whenever $z$ labels a cycle in $\mathcal{A}$.

Suppose now that $\Rad_{n}(\varphi)$ is infinite and write $m =
|Q|$. There exists some $e \in \Rad_{n}(\varphi)$ with $||e|| \ge
m$. Take $u \in L \cap e\pi\inv$. Then $u$ must have some prefix $v$
such that $|\oo{v}| = ||e||$, hence there exists some path in
$\mathcal{A}$ of the form
$$q_0 \mapright{v} q \mapright{w} t \in T.$$
If we successively remove all nontrivial cycles from the path $q_0
\mapright{v} q$, we get a path $q_0 \mapright{v'} q$ of length $<
m$. However, since $\oo{z} = 1$
whenever $z$ labels a cycle in $\mathcal{A}$, we have $\oo{v'} =
\oo{v}$ and so $m \le ||e|| = |\oo{v}| = |\oo{v'}| \le |v'| < m$, a
contradiction. Therefore $\Rad_{n}(\varphi)$ is finite.

(ii) $\Rightarrow$ (i). Trivial.
\end{proof}

The next corollary provides an infinite class of examples, for
arbitrary curl $> 1$, where $\Rad_N(\varphi)$ is not rational:

\begin{cor}
\label{exnonrat}
Let $\varphi \in \End(M_A)$ be such that $\varphi|_{\wt{A}}$ is a
permutation without fixed points. Let $N = \Curl(\oo{\varphi})$. Then
$\Rad_{N}(\varphi)$ is context-free but not rational.
\end{cor}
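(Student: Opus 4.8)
The plan is to handle the two assertions separately, using results already in place. The context-free part is immediate: since $N=\Curl(\oo{\varphi})$, Theorem~\ref{cfrad} gives at once that $\Rad_{N}(\varphi)$ is context-free. So the real content is to show that $\Rad_{N}(\varphi)$ is \emph{not} rational, and for this I would route everything through Theorem~\ref{nonrat}.

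First I would verify the hypothesis $\Fix(\oo{\varphi})=1$ of that theorem. Since $\varphi|_{\wt{A}}$ is a permutation of $\wt{A}$ with no fixed points, $\oo{\varphi}$ is the automorphism of $F_A$ induced by that permutation, and a nonempty reduced word can be fixed by $\oo{\varphi}$ only if each of its letters is fixed by the permutation, which is impossible; hence $\Fix(\oo{\varphi})=1$. (This is also recorded, for $\wh{\varphi}$, in the proof of Corollary~\ref{grper}, and $\Fix(\oo{\varphi})\subseteq\Fix(\wh{\varphi})$.) By Theorem~\ref{nonrat} applied with $n=N$, it now suffices to prove that $\Rad_{N}(\varphi)$ is infinite.

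To do this I would identify $\Rad_{N}(\varphi)$ explicitly. The permutation $\varphi|_{\wt{A}}$ of the finite set $\wt{A}$ has finite order, so $\varphi^{k}|_{\wt{A}}=\mathrm{id}$ for some $k\ge 1$, and since $\varphi^{k}$ and the identity agree on the generating set $\wt{A}$ we obtain $\varphi^{k}=\mathrm{id}_{M_A}$. Then $\oo{\varphi}^{k}=\mathrm{id}_{F_A}$, so $\Per(\oo{\varphi})=\Fix(\oo{\varphi}^{k})=F_A$; by definition of the curl this forces $\Fix(\oo{\varphi}^{N})=F_A$, i.e. $\oo{\varphi}^{N}=\mathrm{id}$, whence $\varphi^{N}|_{\wt{A}}=\mathrm{id}$ and $\varphi^{N}=\mathrm{id}_{M_A}$. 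Consequently $\Fix(\varphi^{N})=M_A$, and since $\Fix(\oo{\varphi})=1$,
$$\Rad_{N}(\varphi)=\{\,u\in M_A\mid \oo{u}=1\,\}=E(M_A).$$
As $A\ne\emptyset$, the monoid $M_A$ has infinitely many idempotents (for any $a\in A$, the words $a^{n}(a\inv)^{n}$ represent pairwise distinct idempotents), so $\Rad_{N}(\varphi)$ is infinite, and Theorem~\ref{nonrat} then gives that it is not rational.

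The proof is short once the earlier results are granted; the one step calling for a little care is the curl bookkeeping above — making sure that $N=\Curl(\oo{\varphi})$ forces the \emph{monoid} endomorphism $\varphi^{N}$, and not merely its free-group counterpart $\oo{\varphi}^{N}$, to be the identity. This is exactly where the hypothesis that $\varphi$ is the endomorphism of $M_A$ determined by a permutation of $\wt{A}$ is used.
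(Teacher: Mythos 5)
Your proof is correct and follows essentially the same route as the paper's: use Theorem~\ref{cfrad} for context-freeness, observe $\Per(\oo{\varphi})=F_A$ so that $\oo{\varphi}^N=\mathrm{id}_{F_A}$, deduce $\varphi^N=\mathrm{id}_{M_A}$, identify $\Rad_N(\varphi)=E(M_A)$ (infinite), and apply Theorem~\ref{nonrat}. You simply spell out a few steps (the verification of $\Fix(\oo{\varphi})=1$, and why $\oo{\varphi}^N=\mathrm{id}_{F_A}$ forces $\varphi^N=\mathrm{id}_{M_A}$) that the paper leaves implicit.
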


\begin{proof}
By Theorem \ref{cfrad}, $\Rad_{N}(\varphi)$ is context-free. Since
$\oo{\varphi}|_{\wt{A}}$ is also a
permutation without fixed points, we have $\Per(\oo{\varphi}) = F_A$, hence
$\Fix(\oo{\varphi}^N) = F_A$ and so  $\oo{\varphi}^N = 1_{F_A}$. It
follows that
$\varphi^N = 1_{M_A}$. Since $\Fix(\oo{\varphi}) = 1$, we get
$\Rad_{N}(\varphi) = E(M_A)$ and so
$\Rad_{N}(\varphi)$ is infinite. Since $\Fix(\oo{\varphi}) = 1$, we
may apply Theorem \ref{nonrat}, thus $\Rad_{N}(\varphi)$ is not rational.
\end{proof}

Finally, we use Lemma \ref{radic}(iv) to show that $\Fix(\varphi)$ is
always context-sensitive:

\begin{theorem}
\label{consen}
Let $\varphi \in \End(M_A)$. Then $\Fix(\varphi)$ is
context-sensitive.
\end{theorem}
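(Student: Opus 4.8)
The plan is to exploit Lemma~\ref{radic}(iv), which expresses $\Fix(\varphi)$ as $\{\mathcal{K}_\varphi(uu\inv)u \mid u \in \Rad_N(\varphi)\}$ where $N = \Curl(\oo{\varphi})$, and to combine it with Theorem~\ref{cfrad}, which tells us that $\Rad_N(\varphi)$ is context-free, hence represented by a context-free language $K \subseteq \wt{A}^*$. The idea is then to show that the map $u \mapsto \mathcal{K}_\varphi(uu\inv)u$ can be realized at the level of languages by a context-sensitive transduction: from a word $v \in K$ representing $u$, we want to produce a word representing $\mathcal{K}_\varphi(uu\inv)u = (uu\inv)(uu\inv)\varphi \cdots (uu\inv)\varphi^{p-1}u$, where $p$ is the period of the idempotent $uu\inv$. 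Since $uu\inv$ is $\varphi$-stable (it lies in $\Rad_N(\varphi) \subseteq \Per(\varphi)$, so $(uu\inv)\varphi^m = uu\inv$ for some $m$, whence the orbit is finite), the product is finite and each factor $(uu\inv)\varphi^i$ is obtained by applying a fixed morphism $i$ times.

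The key steps, in order, would be: (1) Fix a context-free language $K$ with $K\pi = \Rad_N(\varphi)$ and observe that $vv\inv\pi = (v\pi)(v\pi)\inv$ for every $v$, so from $v$ alone we can write down the word $vv\inv$ representing $uu\inv$. (2) Let $\psi: \wt{A}^* \to \wt{A}^*$ be a morphism with $a\psi\pi = a\pi\varphi$ for each $a$; then $(vv\inv)\psi^i$ is a word representing $(uu\inv)\varphi^i$, and the target element $\mathcal{K}_\varphi(uu\inv)u$ is represented by $(vv\inv)(vv\inv)\psi(vv\inv)\psi^2 \cdots (vv\inv)\psi^{p-1}v$ for the appropriate $p$. (3) The delicate point is that $p$ depends on $u$, but it is bounded: since $a \in \St_\varphi$ for every letter $a$ occurring in (a reduced representative of) $u$ by Proposition~\ref{prop: edge are stable}, and the period of $uu\inv$ divides $\lcm$ of the periods of the finitely many relevant idempotents, one checks that there is a global bound $P$ — indeed the orbit $\{(uu\inv)\varphi^i\}$ stabilizes within a number of steps controlled by $\|u\|$ and the Munn tree of $u$, but more simply one can take $P = \Curl(\varphi|_{E})$ if it exists, or argue that $(uu\inv)\varphi^{N'} = uu\inv$ for a fixed $N'$ once we know $uu\inv \in \Per(\varphi)$ and that $\Per(\varphi)$ restricted to idempotents has bounded curl because $\Per(\varphi)$ is finitely generated by Theorem~\ref{perio}. (4) With a uniform bound $P$ in hand, define the language
\[
L = \{\, (vv\inv)(vv\inv)\psi \cdots (vv\inv)\psi^{P-1}\, v \mid v \in K \,\},
\]
and show $L\pi = \Fix(\varphi)$ using that $(uu\inv)\varphi^i = uu\inv$ once $i$ passes the true period, so the padded product equals $\mathcal{K}_\varphi(uu\inv)u$ regardless. (5) Finally, show $L$ is context-sensitive: it is the image of the context-free language $K$ under the substitution $v \mapsto (vv\inv)(vv\inv)\psi \cdots (vv\inv)\psi^{P-1}v$, which is an inverse-free morphism-like operation (each $\psi^i$ is a morphism, concatenation and the involution $v \mapsto vv\inv$ are context-sensitivity-preserving on words), and the class of context-sensitive languages is closed under such finite substitutions and concatenation; alternatively, build a linear-bounded automaton that, given a word, guesses $v$, verifies $v \in K$ (context-free, hence in $\mathrm{NSPACE}(\log)$ after a suitable encoding, certainly in linear space), and checks the self-concatenation structure using the fact that $|(vv\inv)\psi^i| \le C^i |v|$ is linearly bounded for fixed $i < P$.

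The main obstacle I anticipate is step~(3): obtaining a \emph{uniform} bound $P$ on the period of $uu\inv$ as $u$ ranges over $\Rad_N(\varphi)$. The cleanest route is to note that $\Per(\varphi)$ is finitely generated (Theorem~\ref{perio}), so $E(\Per(\varphi)) = E(M_A) \cap \Per(\varphi)$ is a finitely generated subsemilattice, hence finite as a monoid — wait, it need not be finite since $\Per(\varphi)$ can be infinite with infinitely many idempotents; rather, one uses that the generating set $B = \bigcup_k \mathcal{T}_{\varphi^{Nk}}$ from Theorem~\ref{perio} is finite, so every element of $\Per(\varphi)$, in particular $uu\inv$, is a product of boundedly-many generators only in \emph{length of the word}, not in a way that bounds its period directly; the honest argument is that $\varphi$ permutes the finite set $\{e\varphi^i : i \ge 0\}$ for each stable $e$, and since by Proposition~\ref{prop: edge are stable} only letters in a bounded-size alphabet occur, and the period of $uu\inv$ divides the lcm of the periods of $aa\inv$ for $a$ occurring in $u$ together with combinatorial data of $T(u)$ — but this lcm is over a \emph{fixed finite} set $\{aa\inv : a \in \wt{A}\} = \{aa\inv : a \in \St_\varphi\}$ (finiteness of $\wt{A}$!), so $P := \lcm\{\,\text{period of } aa\inv : a \in \St_\varphi\,\}$ works provided one shows the period of $uu\inv$ divides this $P$, which follows because $(uu\inv)\varphi^P = \prod (a_ia_i\inv)\varphi^P$ over the (reduced) factorization and each factor is fixed by $\varphi^P$. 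Once this uniform $P$ is secured, the rest is a routine verification of closure properties, and I would present step~(5) by invoking standard closure of the context-sensitive class under morphic images of a mild type and under concatenation, with a one-line sanity check that all intermediate words have length polynomially (indeed linearly) bounded in $|v|$ so that a linear-bounded automaton suffices.
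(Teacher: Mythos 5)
Your proposal follows essentially the same route as the paper: use Lemma~\ref{radic}(iv) together with the context-freeness of $\Rad_N(\varphi)$ (Theorem~\ref{cfrad}), find a uniform bound on the period of $uu\inv$, and then argue that the language of padded words is context-sensitive. Two of your intermediate justifications, however, do not hold as stated, and in both cases the paper's version (or the alternative you mention in the same breath) is what actually works.

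First, in step~(3) the ``honest argument'' via $P=\lcm\{\text{period of }aa\inv : a\in\St_\varphi\}$ rests on the claim that $(uu\inv)\varphi^P=\prod(a_ia_i\inv)\varphi^P$ ``over the (reduced) factorization.'' This decomposition of $uu\inv$ is false: in $M_A$ one has $T(uu\inv)=T(u)$, whereas $T\bigl(\prod_i a_ia_i\inv\bigr)=\{1\}\cup\{\oo{a_i}\}$, so $uu\inv\neq\prod_i a_ia_i\inv$. The correct conjugated decomposition involves the prefixes $\oo{a_1\cdots a_{i-1}}$, and stability of each $a_ia_i\inv$ alone does not fix $(uu\inv)\varphi^P$. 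The clean bound is the one you mention as an afterthought: by the proof of Theorem~\ref{perio}, $\Per(\varphi)=\Fix(\varphi^m)$ for some fixed $m$, whence $(uu\inv)\varphi^m=uu\inv$ for every $u\in\Rad_N(\varphi)\subseteq\Per(\varphi)$, and since idempotents commute the padded product $\prod_{i=0}^{m-1}(uu\inv)\varphi^i$ collapses to $\mathcal{K}_\varphi(uu\inv)$ regardless of the exact period. This is exactly equation~(\ref{consen1}) in the paper.

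Second, in step~(5) the map $v\mapsto (vv\inv)(vv\inv)\psi\cdots(vv\inv)\psi^{P-1}v$ is not a morphism or a finite substitution --- it depends on the whole of $v$, not letter by letter --- so the ``closure under finite substitutions and concatenation'' reasoning does not apply and would give a false lemma if pushed. Your alternative (build a nondeterministic linear-bounded automaton that guesses $v$, checks $v\in K$, and verifies the padded-concatenation structure, noting that $|(vv\inv)\psi^i|\le C^{P-1}|v|$ keeps everything in linear space) is sound and is the morally correct argument. The paper packages this slightly differently: it first builds a context-sensitive language $L$ over an \emph{extended alphabet}, encoding the factor ``$(u\beta_i)(u\inv\beta_i)$'' with disjoint letter copies so that each block is literally a renamed copy of $u$, and then applies an $\varepsilon$-free homomorphism $\gamma$ (with $a\alpha_i\gamma=a\psi_i$) to land back in $\wt{A}^*$; context-sensitive languages are closed under $\varepsilon$-free homomorphisms, which is why the $B_i$ sets (dropping letters with $a\varphi^i=1$) are introduced. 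Either packaging works, but the direct ``closure under substitution'' claim should be excised and replaced by the LBA argument or the paper's two-step construction.
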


\begin{proof}
Context-sensitive languages can be characterized as languages accepted
by {\em linear-bounded Turing machines}, i.e. Turing machines
$\mathcal{T}$ for which there exists some constant $K \ge 1$ such that
any word $u \in L(\mathcal{T})$ can be accepted by some computation
using only the first $K|u|$ cells of the tape.

By Theorem \ref{perio}, we have $\Per(\varphi) = \Fix(\varphi^m)$ for
some $m \ge 1$. Let $A_1, \ldots,A_{m-1}$ be disjoint
copies of $A$, inducing bijections $\alpha_i:\wt{A} \to \wt{A_i}$
for $i = 1, \ldots, m-1$. Let
$$B_i = \{ a \in A \mid a\varphi^i \neq 1\}$$
for $i = 1, \ldots, m-1$. We define a
matched homomorphism $\beta_i: \wt{A}^* \to \wt{B_i\alpha_i}^*$ by
$$a\beta_i = \left\{
\begin{array}{ll}
a\alpha_i&\mbox{ if }a \in B_i\\
1&\mbox{ if }a \in A\setminus B_i
\end{array}
\right.$$
Let $N = \Curl(\oo{\varphi})$.
By Theorem \ref{cfrad}, $\Rad_{N}(\varphi)$ is context-free. Hence we
have $\Rad_{N}(\varphi)
= C\pi$ for some context-free language $C \subseteq \wt{A}^*$.
We define
$$L = \{ uu\inv(u\beta_1)(u\inv\beta_1)(u\beta_2)(u\inv\beta_2)\ldots
(u\beta_{m-1})(u\inv\beta_{m-1})u \mid u \in C \}.$$
It is easy to see that $L$ is accepted by a linear-bounded Turing
machine $\mathcal{T}$. Indeed, since $C$ is context-free, $C$ is
itself accepted by
a linear-bounded Turing machine $\mathcal{T}'$. We can make
$\mathcal{T}$ consider all the possible factorizations of the input
potentially leading to some word of the form
$$uu\inv(u\beta_1)(u\inv\beta_1)\ldots
(u\beta_{m-1})(u\inv\beta_{m-1})u$$
(using multiple tracks to help),
then using $\mathcal{T}'$ as a subroutine and checking each one of the
presumed factors using the definitions of the $B_i$ and
$\beta_i$. Therefore $L$ is context-sensitive.

For $i = 1, \ldots, m-1$, fix a matched endomorphism $\psi_i$ of
$\wt{A}^*$ satisfying $\psi_i\pi = \pi\varphi^i$ (it suffices to have
$a\psi_i\pi = a\pi\varphi^i$ for every $a \in A$, so it really exists).
Let $X = A \cup B_1\alpha_1 \cup \ldots \cup B_{m-1}\alpha_{m-1}$ and
define a matched homomorphism $\gamma:
\wt{X}^* \to \wt{A}^*$ as follows. Given $a \in A$, let $a\gamma = a$.
Given $a \in B_i$, let $a\alpha_i\gamma = a\psi_i$. Note that
$a\alpha_i\gamma \neq 1$ since $a\varphi^i \neq 1$ by definition of $B_i$.

Since context-sensitive languages are closed under $\varepsilon$-free
homomorphisms, it follows that $L\gamma$ is context-sensitive.
Note that $a\alpha_i\gamma\pi = a\psi_i\pi = a\pi\varphi^i$ whenever
$a \in B_i$, so $a\beta_i\gamma\pi = a\pi\varphi^i$ for every
$a \in A$ (if $a \notin B_i$, then $a\beta_i = 1 =
a\pi\varphi^i$). Thus we get
$$\begin{array}{lll}
L\gamma\pi&=&\{ uu\inv(u\beta_1)(u\inv\beta_1)\ldots
(u\beta_{m-1})(u\inv\beta_{m-1})u \mid u \in C \}\gamma\pi\\
&=&\{ vv\inv(v\varphi)(v\inv\varphi)\ldots
(v\varphi^{m-1})(v\inv\varphi^{m-1})v \mid v \in C\pi \}.
\end{array}$$
Recall that $C\pi = \Rad_{N}(\varphi)$. Moreover, we claim that
\begin{equation}
\label{consen1}
vv\inv(v\varphi)(v\inv\varphi)\ldots
(v\varphi^{m-1})(v\inv\varphi^{m-1}) = \mathcal{K}_{\varphi}(vv\inv).
\end{equation}
Indeed, if $p$ is the period of $vv\inv$, then
$\mathcal{K}_{\varphi}(vv\inv) = vv\inv (vv\inv)\varphi \ldots
(vv\inv)\varphi^{p-1}$. Since $vv\inv \in \Per(\varphi) =
\Fix(\varphi^m)$, we have $p \le m$ and so (\ref{consen1})
holds. Therefore, by Lemma \ref{radic}(iv), we get
$$\Fix(\varphi) = \{ \mathcal{K}_{\varphi}(vv\inv)v
\mid v \in \Rad_{N}(\varphi) \} = L\gamma\pi.$$
Since $L\gamma$ is context-sensitive, so is $\Fix(\varphi)$.
\end{proof}

\section{Open Problems}\label{sec: open problems}
We give a list of natural open problems originated by the previous results.

\begin{prob}
\label{prob1}
To find examples of $\varphi \in \End(M_A)$ such that:
\begin{itemize}
\item[(i)] $\Fix(\varphi)$ is context-free but not rational;
\item[(ii)] $\Fix(\varphi)$ is rational but not finitely generated.
\end{itemize}
\end{prob}

\begin{prob}
\label{prob2}
To show that a basis of $\Per(\theta)$ can be computed for every
$\theta \in \End(F_A)$.
\end{prob}

\begin{prob}
\label{prob3}
To show that $\Curl(\theta)$ can be computed for every
$\theta \in \End(F_A)$.
\end{prob}

If Problem \ref{prob2} is solved, then Problem \ref{prob3} is solved
as well: we just compute basis of $\Per(\theta)$, $\Per(\theta^{2!})$,
$\Per(\theta^{3!})$ until we reach $\Per(\theta^{n!}) =
\Per(\theta^{(n+1)!})$ for some $n$ (and we know we eventually will)
by Proposition \ref{perfix}.

\begin{prob}
\label{prob4}
To compute $\St_{\varphi}$ for an arbitrary $\varphi \in \End(M_A)$.
\end{prob}

If we compute the $\varphi$-stable letters, we can compute the
$\varphi$-tiles in $\mathcal{T}_{\varphi}$.

\section*{Acknowledgments}

The authors acknowledge support from the European Regional Development
Fund through the
programme COMPETE and by the Portuguese Government through the FCT --
Funda\c c\~ao para a Ci\^encia e a Tecnologia under the project
PEst-C/MAT/UI0144/2011.
The first author also acknowledges the support
of the FCT project SFRH/BPD/65428/2009.

\end{document}